\numberwithin{figure}{section} \numberwithin{equation}{section}
\newcommand{\ep}{\varepsilon}
\newcommand{\R}{\mathbb{R}}
\newcommand{\tildezero}{\gamma _0 ^\ep}
\newcommand{\tildet}{\gamma _t ^\ep}
\newcommand{\tildeint}{\Omega _t ^\ep}
\newcommand{\tildeext}{\Omega \setminus \overline{\tildeint}}
\newcommand{\EB}{e^{-\beta t/\ep ^2}}
\newtheorem{theorem}{Theorem}[section]
\newtheorem{lemma}[theorem]{Lemma}
\newtheorem{corollary}[theorem]{Corollary}
\newtheorem{prop}[theorem]{Proposition}
\begin{document}
%
%
%
%
%
\title[Interfaces for the Stochastic Allen--Cahn equation]
{
Generation
of fine transition layers and their dynamics
    for the stochastic Allen--Cahn equation}

%

%
\author{M. Alfaro}
\address{Matthieu Alfaro, IMAG, Univ. Montpellier, CNRS, Montpellier, France.} \email{matthieu.alfaro@umontpellier.fr}

\author{D. Antonopoulou}
\address{Dimitra Antonopoulou, Department of Mathematics, University of
Chester, Thornton Science Park, CH2 4NU, Chester, UK, and,
Institute of Applied and Computational Mathematics, FORTH,
GR--711 10 Heraklion, Greece.}
\email{d.antonopoulou@chester.ac.uk}

\author{G. Karali}
\address{Georgia Karali, Department of Mathematics and Applied Mathematics, University of Crete,
GR--714 09 Heraklion, Greece, and, Institute of Applied and
Computational Mathematics, FORTH, GR--711 10 Heraklion, Greece.}
\email{gkarali@uoc.gr}

 \author{H. Matano}
 \address
{Hiroshi Matano, Meiji Institute for Advanced Study of Mathematical Sciences, Meiji University, 4-21-1 Nakano, Tokyo 164-8525, Japan.}
\email{matano@meiji.ac.jp}

\subjclass{35K55, 35B25, 60H30, 60H15.}
%
%

\begin{abstract}
We study an $\ep$-dependent stochastic Allen--Cahn equation with a
mild random noise on a bounded domain in $\mathbb{R}^n$, $n\geq
2$. Here $\ep$ is a small positive parameter that represents
formally the thickness of the solution interface, while the mild
noise $\xi^\ep(t)$ is a smooth random function of $t$ of order
$\mathcal O(\ep^{-\gamma})$ with $0<\gamma<1/3$ that converges to
white noise as $\ep\rightarrow 0^+$. We consider initial data that
are independent of $\ep$ satisfying some non-degeneracy
conditions, and prove that steep transition layers---or
interfaces---develop within a very short time of order
$\ep^2|\ln\ep|$, which we call the ``generation of interface".
Next we study the motion of those transition layers and derive a
stochastic motion law for the sharp interface limit as
$\ep\rightarrow 0^+$. Furthermore, we prove that the thickness of
the interface for $\ep$ small is indeed of order $\mathcal O(\ep)$
and that the solution profile near the interface remains close to
that of a (squeezed) travelling wave; this means that the
presence of the noise does not destroy the solution profile near
the interface  as long as the noise is spatially uniform.
Our results on the motion of interface improve the earlier
results of Funaki (1999) and Weber (2010) by considerably
weakening the requirements for the initial data and establishing
the robustness of the solution profile near the interface that
has not been known before.
\end{abstract}
%
%
 \maketitle


 \pagestyle{myheadings}
\thispagestyle{plain}
%
%
%
\section{Introduction}\label{s:intro}

We consider a stochastic
 Allen--Cahn equation with  a Neumann boundary condition
\begin{equation}\label{AC}
\begin{split}
&\partial _t u= \Delta u + \frac{1}{\ep^2} f(u)+\frac{1}{\ep}
\xi^\ep(t),
\;\;\;t>0,\;\;\;x \in\Omega,\\
&\frac{\partial u}{\partial \nu}=0,\;\;\;t>0,\;\;\;x\in\partial\Omega,\\
&u(x,0)=u_0(x),\;\;\;x\in\Omega,
\end{split}
\end{equation}
where $\Omega$ is a smooth open bounded domain in $\mathbb{R}^n$
($n\geq 2$), $\nu$ is the
outward unit normal vector to
$\partial \Omega$, and $\ep>0$ is a small parameter.
The nonlinearity $f$ is of the {\it bistable} type, and the
perturbation term $\xi ^\ep (t)$ is what we call a mild noise
which is a smooth but random function of $t$ that behaves like
 an irregular white noise in the limit as $\ep  \to 0$.
As mentioned in \cite{Fun99}, such an equation can be viewed
as describing intermediate (mesoscopic) level phenomena
between macroscopic and microscopic ones. In such a scale, an
active noise appears as a correction term to the reaction-diffusion
equation when fluctuations in the hydrodynamic limit is
taken into account, see \cite{Spohn}.

Our main goal is to make a detailed analysis of the
{\it sharp interface limit} of the problem \eqref{AC} as $\ep\to 0$.
In the deterministic case where the perturbation term
$\xi^\ep$ is replaced by non-random, uniformly bounded
smooth functions, the sharp interface limit of \eqref{AC} is
well understood: it is known that the solution $u$ typically
develops steep transition layers---or {\it interfaces}---of
thickness ${\mathcal O}(\ep)$ within a very short time,
which we call the {\it generation of interface} (or one may
call it the {\it emergence of transition layers}).
Furthermore, as $\ep\to 0$,
those layers converge to interfaces of thickness $0$ whose
law of motion is given by the curvature flow with
some driving force (the {\it propagation of interface}).
See \cite{ahm, C1} and the references therein for details.

 In the present problem, the perturbation term $\xi^\ep(t)$
is random while it is no longer uniformly bounded as $\ep\to 0$,
since it converges to white
 noise in a certain sense. This makes the analysis harder than  in
the classical deterministic case. Nonetheless, as we shall see,
it is possible to derive a number of results that are as optimal
as those established for the classical deterministic problem.

Our first result concerns the {\it generation of interface}. More
precisely, we consider solutions of \eqref{AC} with
$\ep$-independent initial data, and show that steep transition
layers of thickness ${\mathcal O}(\ep)$ emerge within a very
short time. This thickness estimate of order ${\mathcal O}(\ep)$
is the same optimal estimate known for the classical
deterministic problem. Next, we discuss the {\it propagation of
interface} and show that the thickness of the layer remains of
order ${\mathcal O}(\ep)$ as time passes, and that in the
sharp interface limit, as $\ep\to 0$, the law of motion of the
interface is given by
\begin{equation}\label{mmc-debut}
V=(n-1)\kappa + c \dot{W}_t,
\end{equation}
where $V$ is the inward normal velocity, $\kappa$ denotes the
 mean curvature, 
$c$ is a positive constant and $\dot{W}_t$ is a white noise. The
above equation was first derived in \cite{Fun99, weber1} for a
special class of $\ep$-dependent initial data that already have
well-developed transition layers. Our result confirms the
validity of the same equation for rather general $\ep$-independent
initial data. Furthermore, we also show that the {\it profile of
the solution} near the interface is well approximated by  a
traveling wave. This implies that the solution profile near the
interface is quite robust and is not destroyed by the random
noise, as long as the noise depends only on the time variable.

The singular limit of a stochastic Allen--Cahn equation of the
form \eqref{AC} was studied by Funaki in his pioneering work
\cite{Fun99} for two space dimensions, and later by Weber
\cite{weber1} for general space dimensions $n\geq 2$. Our results
improve the work of \cite{Fun99, weber1} in three notable
aspects. First, as mentioned above, our paper studies the
emergence of steep transition layers (the {\it generation of
interface}) at the very initial stage of evolution, which is not
discussed in \cite{Fun99, weber1}. Secondly, our ${\mathcal
O}(\ep)$ estimate of the thickness of layers is optimal and
therefore, is considerably better than the order ${\mathcal
O}(\ep^\alpha)\,(0<\alpha<1)$ estimates presented in
\cite{Fun99, weber1}. Thirdly, we show the robustness of the
solution profile around the interface in the presence of noise
({\it rigidity of profile}), a fact that has been totally unknown
before.

Concerning results on the generation of interface, let us also mention the very recent papers
\cite{Lee-2015, Lee-2016}. In \cite{Lee-2015}, the author considers the one-dimensional case
with space-time white noise and studies both the generation and motion of the interface, thus improving the work
\cite{Fun95}, which did not consider the generation of interface. However, as we shall explain in Subsection
\ref{ss:deterministic-stochastic}, the one-dimensional case is totally  different from the multi-dimensional
case as the curvature effect does not appear in the former. Therefore the problems treated in \cite{Fun95, Lee-2015}
are different from the subject of the present paper. In \cite{Lee-2016}, the authors considers a multi-dimensional
problem under a space-time noise that is smooth in $x$. However \cite{Lee-2016} deals with only the generation of
interface, thus the motion of interface under such a noise remains unknown.

We also refer to the influential theory of stochastic
viscosity solutions of Lions and Souganidis which covers a large
class of stochastic fully nonlinear partial differential
equations with applications to phase transitions and propagation
of fronts in the presence of noise; see for example the works
\cite{Lio-Sou-98, Lio-Sou-98-bis, Lio-Sou-00, Lio-Sou-00-bis}. In \cite{Lio-Sou-98}, the authors introduced the
notion of weak solutions (stochastic viscosity solutions) for
parabolic, possibly degenerate, second-order stochastic pdes
posed in $\mathbb{R}^N$. In particular \cite[subsection 2.3]{Lio-Sou-98-bis}
 considers the specific case of the
$\varepsilon$-dependent Allen-Cahn equation with the stochastic
perturbation introduced by Funaki in \cite{Fun99}, posed in the
unbounded domain $\mathbb{R}^N$.  It is supplemented with a general initial data $u_0$, not depending on $\ep$ and with not
necessarily convex initial interface
$\Gamma_0:=\left\{x\in\mathbb{R}^N:\;u_0(x)=0\right\}$, see \cite[(2.6) and (2.7)]{Lio-Sou-98-bis}. In the context of viscosity solutions, a proper
approximation of the stochastic problem is proposed which, on the
sharp interface limit, yields the stochastic motion by mean
curvature and the limiting profile of $u$ to $\pm 1$. Notice however that our approach stands in the
short-time existence of a solution to \eqref{mmc-debut} as proposed in \cite{Fun99} and \cite{DLN}, see Section \ref{s:mmc}. This allows us to prove much finer properties of the convergence of \eqref{AC} to \eqref{mmc-debut}, namely the optimal thickness estimate of the thin layers of solutions of \eqref{AC} when $\ep$ is very small (see \eqref{resultat} of Theorem~\ref{th:main}), and the proof of the robustness of the layer profile (see Theorem~\ref{th:profile}), neither of which can be obtained through the viscosity approach, see \cite{Alf-Dro-Mat-12}.



\subsection{Assumptions}\label{ss:assumptions}

Let us state our standing assumptions in the present paper.
The nonlinearity is given by $f(u):=-W'(u)$, where $W(u)$ is a
double-well potential with equal well-depth, taking its global
minimum value at $u=a_ \pm$. More specifically, we assume that
\begin{equation}\label{cf0}
f\ \ \mbox{is}\ \ C^2\ \ \mbox{and has exactly three zeros} \ \
a_-<a<a_+,
\end{equation}
\begin{equation}\label{cf1}
f'(a_\pm)<0,\ \ f'(a)>0,
\end{equation}
and
\begin{equation}\label{equalwell}
\int_{a_-}^{a_+}f(u)du=0.
\end{equation}
This last assumption \eqref{equalwell} makes $f$ a {\it balanced}
bistable nonlinearity. We will use this assumption only in
Section \ref{s:motion}, where we study the propagation of
interface. No such assumption is needed for the emergence of
interface, which we discuss in Section \ref{s:emergence}.


Concerning the initial data, we assume that $u_0\in
C^2(\overline{\Omega})$, and define
\begin{equation}\label{cu0}
C_0:=\|u_0\|_{C^0(\overline{\Omega})}+\|\nabla
u_0\|_{C^0(\overline{\Omega})}+\|\Delta
u_0\|_{C^0(\overline{\Omega})}.
\end{equation}
The initial interface is defined by
\begin{equation}\label{def:initial-interface}
\Gamma_0:=\left\{x\in\Omega:\;u_0(x)=a\right\}.
\end{equation}
We assume that $\Gamma _0 \subset \subset \Omega$ is a
$C^{2,\alpha}$ ($0<\alpha<1$) hypersurface without boundary
and that
\begin{equation}\label{hyp:gradient}
\nabla u_0(x)\cdot n(x)\neq 0\; \text{ for any } x\in\Gamma_0,
\end{equation}
where $n=n(x)$ denotes the outward unit normal vector to
$\Gamma_0$ at $x$.

Let $\Omega_0$ denote the region enclosed by $\Gamma_0$. Without loss
of generality, we may assume that
\begin{equation}\label{def:interior}
u_0(x)<a\;\mbox{ for any }x\in \Omega_0\ \ \mbox{and} \ \
u_0(x)>a\;\mbox{ for any }x\in \Omega\setminus\overline{\Omega_0}.
\end{equation}


As regards the perturbation term $\xi^\ep(t)$, we shall consider
two types of mild noises as specified below, following \cite{Fun99} and \cite{weber1}.\\

\underline{\textit{First type of noise (MN1)}}\\

Following Funaki \cite{Fun99}, we consider a mild noise
$\xi^\ep$ given in the form
\begin{equation}\label{1}
\xi^\ep(t):=\ep^{-\gamma _1}\xi(\ep^{-2\gamma _1}t),\;\;\;t>0,
\end{equation}
for some
\begin{equation}\label{gamma1}
0<\gamma _1<\frac{1}{3},
\end{equation}
 where $\xi(t)=\xi_t$ is a
stochastic process in $t$ that is
stationary and strongly mixing. More specifically, let $F_{\xi_{t_1 + \tau}, \ldots,
\xi_{t_k + \tau}}$ be the distribution function of the $k$ random variables $\xi_{t_1 + \tau}, \ldots,
\xi_{t_k + \tau}$, then the stochastic process
$\xi_t$ is called \textit{stationary} if for all $k$, $\tau$ and
for all $t_1, \ldots, t_k$
$$
F_{\xi_{t_1+\tau} ,\ldots, \xi_{t_k+\tau}}=
F_{\xi_{t_1},\ldots, \xi_{t_k}}.
$$
Let
$(\Omega_{prob},\mathcal{F},\mathbb P)$ be the probability space where
$\xi_t$ is realized, with $\mathcal{F}:=\sigma(\xi_r:\;0\leq r<
+\infty)$ the $\sigma$-algebra generated by $\xi_{r}$ for $0\leq
r< +\infty$, and $\mathbb P$ the probability measure. Then
$\mathcal{F}_{s,t}:=\sigma(\xi_r:\;s\leq r\leq t)$ is the subalgebra
of $\mathcal{F}$ generated by $\xi_{r}$ for $s\leq r\leq t$.
We assume that the process $\xi_t$ is strongly mixing in the
following sense: the mixing rate $\rho(t)$ defined by
\[
\rho(t):=\sup_{s\geq 0}\,\sup_{A\in
\mathcal{F}_{s+t,\infty},\;B\in\mathcal{F}_{0,s}}|\mathbb P(A \cap B) -
\mathbb P(A)\mathbb P(B)|/\mathbb P(B), \quad t\geq 0,
\]
satisfies
\[
\int_0^{\infty}\rho(t)^{1/p}dt<+\infty \quad\ \hbox{for some} \ \ p>3/2.
\]
In Funaki \cite{Fun99}, this last condition is used to derive
some estimates that are uniform in $\ep$;
see the proof of Proposition 4.1 and
Lemma 5.3 in \cite{Fun99}.


Furthermore, it is assumed that $t\mapsto \xi(t) \mbox{ is } C^1$
almost surely,
$$
|\xi(t)|\leq M, \quad\  |\dot{\xi}(t)|\leq M, \quad\ E[\xi
(t)]=0,
$$
for some deterministic constant $M$, with $\dot{\xi}:=\frac{d\xi}{dt}$.
Obviously, the above implies that
$$
t\mapsto \xi ^\ep(t) \mbox{ is } C^1 \mbox{ almost surely},
$$
and that
\begin{equation}\label{estimates}
\left|\xi^\ep(t)\right|\leq M\ep^{-\gamma _1}, \quad
|\dot{\xi^\ep}(t)|\leq M\ep^{-3\gamma _1}.
\end{equation}
In Funaki \cite{Fun99}, these conditions are used to justify the
limit interface equation \eqref{mmc} as $\ep\to 0$, but, as we
shall see, the estimate \eqref{estimates} will also
be fundamental for our analysis of  the initial formation of layers (the generation of interface).

Notice that
the coefficient $\ep^{-\gamma_1}$ in the definition
\eqref{1}
implies that $\xi^\ep(t)$ is unbounded as $\ep \to 0$.
As shown in \cite{Fun99}, $\xi^\ep(t)$ converges to an irregular
white noise as $\ep\to 0$ in a certain sense.


\vskip 10pt
\underline{\textit{Second type of noise (MN2)}}\\

Following Weber \cite{weber1}, we define the mild noise
$\xi^\ep(t)=\xi_t^\ep$ as the derivative of a mollified Brownian
motion. More precisely, let $W(t)$, $t\geq 0$, be a Brownian
motion defined on the space $(\Omega_{prob},\mathcal F,
\mathbb{P})$. (Here, as usual, the dependence of $W$ on the sample
points $\omega\in\Omega_{prob}$ is not shown explicitly.) For
technical reasons, $W(t)$ is extended over $\R$ by considering an
independent Brownian motion $\widetilde W(t)$, $t\geq 0$, and
setting $W(t)=\widetilde W(-t)$ for $t<0$. Then $W(t)$, $t\in
\mathbb{R}$, is a Gaussian process, with independent stationary
increments and a distinguished point $W(0)=0$ almost surely. Also,
let $\rho:\mathbb{R}\rightarrow \mathbb{R}_+$ be a mollifying
smooth and symmetric kernel, with $\rho=0$ outside $[-1,1]$ and
$\int_\R\rho=1$. The approximated Brownian motion $W^\ep(t), t\geq
0,$ is defined as usual by
\begin{equation}\label{mollifying}
W^\ep (t):=W\ast\rho^\ep(t):=\int_{-\infty}^{\infty}\rho^\ep(t-s)
W(s)ds,
\end{equation}
where $\rho^\ep(\tau):=\ep^{-\gamma _2} \rho(\ep^{-\gamma _2}\tau)$
for some constant $\gamma_2$ satisfying
\begin{equation}\label{gamma2}
0<\gamma _2<\frac{2}{3}.
\end{equation}
Note that the Brownian motion for negative times is needed only in
the expression \eqref{mollifying}, so only the negative times in
$(-\ep^{\gamma_2}, 0]$ will play a role.  The constant $\gamma_2$
determines how quickly $W^\ep$ converges to the true integrated
white noise as $\ep\to 0$. Since $W(t)$ is H\"older continuous
almost surely, $W^\ep(t)$ is a smooth function of $t$ almost
surely. The noise $\xi^\ep(t)$ is then defined as the derivative
of $W^\ep(t)$, that is,
\begin{equation}
\label{def:mild-noise} \xi^\ep(t)=\dot W^\ep(t).
\end{equation}

In \cite[Propositions 1.2 and 1.3]{weber1},
the author
derives estimates for $\xi^\ep(t)$ and its derivative $\dot\xi^\ep(t)$
in the form
\[
\left|\xi^\ep(t)\right|\leq M \ep^{-\tilde\gamma/2}, \quad\ \
|\dot{\xi^\ep}(t)|\leq M \ep^{-3\tilde\gamma/2} \qquad
(\gamma_2<{}^\forall \tilde\gamma<2/3),
\]
by using L\'evy's well-known result on the modulus of continuity
of Brownian motion:
\[
{\mathbb P}\left[\limsup_{\delta\to 0}\frac{1}{g(\delta)}
\max_{\underset{t-s\leq \delta}{0\leq s<t\leq T}}|W(t)-W(s)|=1\right]=1,
\]
where the modulus of continuity is given by
$g(\delta)=\sqrt{2\delta\log(\frac{1}{\delta})}$. Actually the
very same argument as in \cite{weber1} gives the following
slightly more refined estimates, whose proof is omitted
as it is straightforward---roughly speaking it suffices to
set $\delta = \ep^{\gamma _2}$ in $g(\delta)$.


\begin{prop}[Estimates of the noise term] \label{prop:Weber-noise}
For any $T>0$, there exist a non-random constant $M=M(T)>0$ and
(random) $\ep_0>0$ such that, for all $0<\ep\leq \ep_0$ and all
$0\leq t\leq T$,
\begin{equation}\label{estimates2}
\left|\xi^\ep(t)\right|\leq M \ep^{-\gamma _2/2}\vert \log
\ep\vert ^{1/2}, \quad |\dot{\xi^\ep}(t)|\leq M \ep^{-3\gamma
_2/2}\vert\log \ep\vert ^{1/2}.
\end{equation}
\end{prop}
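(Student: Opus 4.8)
\medskip
\noindent\textit{Proof proposal.}
The plan is to differentiate the mollification $W^\ep=W\ast\rho^\ep$ under the integral sign and to exploit the fact that, although $\rho^\ep$ has total mass one, each of its derivatives integrates to zero; this lets us recenter the Brownian path in the convolution by subtracting $W(t)$. Concretely, from \eqref{mollifying} and \eqref{def:mild-noise},
\[
\xi^\ep(t)=\dot W^\ep(t)=\int_{\R}\dot\rho^\ep(t-s)\,W(s)\,ds
=\int_{|t-s|\le\ep^{\gamma_2}}\dot\rho^\ep(t-s)\,\bigl(W(s)-W(t)\bigr)\,ds ,
\]
where $\dot\rho^\ep(\tau)=\ep^{-2\gamma_2}\rho'(\ep^{-\gamma_2}\tau)$ and the second equality uses $\int_{\R}\dot\rho^\ep=0$ together with $\operatorname{supp}\rho^\ep\subset[-\ep^{\gamma_2},\ep^{\gamma_2}]$. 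Hence
\[
|\xi^\ep(t)|\le\|\dot\rho^\ep\|_{L^1(\R)}\sup_{|s-t|\le\ep^{\gamma_2}}|W(s)-W(t)|,
\qquad\|\dot\rho^\ep\|_{L^1(\R)}=\ep^{-\gamma_2}\|\rho'\|_{L^1(\R)},
\]
the norm identity following from the change of variables $\tau=\ep^{-\gamma_2}(t-s)$.

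Next I would bound the local oscillation of $W$ by L\'evy's modulus of continuity, applied on the fixed compact interval $[-1,T+1]$. The two-sided process obtained by gluing $W$ and $\widetilde W$ at the origin has continuous paths and stationary independent increments, so L\'evy's theorem applies to it, and for $\ep$ small only times in $[-\ep^{\gamma_2},0)$ of the negative axis ever enter the convolution. Thus there is a random $\ep_0>0$ such that for all $0<\ep\le\ep_0$, all $0\le t\le T$ and all $s$ with $|s-t|\le\ep^{\gamma_2}$,
\[
|W(s)-W(t)|\le 2\,g(\ep^{\gamma_2})=2\sqrt{2\gamma_2}\;\ep^{\gamma_2/2}\,|\log\ep|^{1/2}.
\]
Combining this with the previous display gives $|\xi^\ep(t)|\le M\,\ep^{-\gamma_2/2}|\log\ep|^{1/2}$ with $M=M(T)$ non-random, which is the first estimate.

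For $\dot\xi^\ep=\ddot W^\ep$ the argument is word for word the same with one additional derivative falling on the kernel: since $\int_{\R}\ddot\rho^\ep=0$,
\[
\dot\xi^\ep(t)=\int_{|t-s|\le\ep^{\gamma_2}}\ddot\rho^\ep(t-s)\,\bigl(W(s)-W(t)\bigr)\,ds,\qquad\ddot\rho^\ep(\tau)=\ep^{-3\gamma_2}\rho''(\ep^{-\gamma_2}\tau),
\]
so $\|\ddot\rho^\ep\|_{L^1(\R)}=\ep^{-2\gamma_2}\|\rho''\|_{L^1(\R)}$ and the same modulus estimate yields $|\dot\xi^\ep(t)|\le M\,\ep^{-2\gamma_2}\cdot\ep^{\gamma_2/2}|\log\ep|^{1/2}=M\,\ep^{-3\gamma_2/2}|\log\ep|^{1/2}$; enlarging $M$ once more makes both bounds hold simultaneously on $[0,T]$.

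I do not expect a genuine obstacle here: this is the computation of \cite{weber1} sharpened by one factor. The only points needing some care are (i) upgrading L\'evy's almost-sure $\limsup$ statement to an honest bound valid for every $\delta\le\ep_0^{\gamma_2}$, at the cost of a random threshold $\ep_0$; and (ii) checking that the two-sided extension of the Brownian path is still covered by L\'evy's theorem, so that the negative-time tail of the mollification is harmless. The real content is the recentering $W(s)\mapsto W(s)-W(t)$, which upgrades the crude bound $\|\dot\rho^\ep\|_{L^1}\,\|W\|_{\infty}\sim\ep^{-\gamma_2}$ to the sharp $\ep^{-\gamma_2/2}|\log\ep|^{1/2}$ dictated by the almost $\tfrac12$-H\"older regularity of Brownian motion at the mollification scale $\ep^{\gamma_2}$.
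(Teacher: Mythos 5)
Your argument is correct and is precisely the one the paper has in mind: it omits the proof as "the very same argument as in \cite{weber1}", namely recentering the convolution using $\int\dot\rho^\ep=\int\ddot\rho^\ep=0$, computing $\|\dot\rho^\ep\|_{L^1}=\ep^{-\gamma_2}\|\rho'\|_{L^1}$ and $\|\ddot\rho^\ep\|_{L^1}=\ep^{-2\gamma_2}\|\rho''\|_{L^1}$, and then inserting $\delta=\ep^{\gamma_2}$ into L\'evy's modulus $g(\delta)$ rather than crudely majorizing it by a power $\ep^{\tilde\gamma/2}$. Your two points of care (the random threshold $\ep_0$ coming from upgrading the $\limsup$, and applying L\'evy's theorem to the two-sided extension on a fixed compact interval) are exactly the right ones and are handled correctly.
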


This is an analogue of \eqref{estimates} and will be fundamental
for our analysis of the emergence of interface.


\subsection{Deterministic and stochastic Allen--Cahn equations}\label{ss:deterministic-stochastic}

The (deterministic) Allen--Cahn equation was proposed in \cite{a-c}
as a model for the dynamics of interfaces in crystal structures
in alloys.  The same equation also appears as a model for various
other problems, including population genetics and nerve conduction.

As far as the one-dimensional case is concerned, the behavior of
the solution as $\ep\to 0$ was analyzed in \cite{CP, chen}. After
a very short time,  the value of the solution becomes close to
$a_+$ or $a_-$ in most part of the domain, thus generating
possibly many very steep transition layers. These well developed
transition layers then start to move very slowly, and each time a
pair of transition layers meet, the two layers annihilate each
other, thus the number of layers decrease gradually. Although
those collision-annihilation process takes place rather quickly,
the motion of layers between the collisions is extremely slow,
and the profile of the layers look nearly unchanged during those
slow motion periods; in other words, the solution exhibits a
metastable pattern. The situation is quite different in the
multi-dimensional case, where such metastable patterns hardly
appear because of the curvature effect on the motion of the
interface. This curvature effect  in higher dimensions is well
illustrated by the sharp interface limit $\ep\to 0$, where the
motion of layers (sharp interfaces) is known to be governed by
the mean curvature flow plus some driving forces. There is a
large  literature on the rigorous justification of this singular
limit; we refer, among others, to \cite{BS}, \cite{C1,C2},
\cite{MS1,MS2}, \cite{ahm}

Stochastic systems of Allen--Cahn type have been analyzed in
\cite{faris}. For the one-dimensional case, in  \cite{Fun95}, \cite{bmp},
the authors studied the stochastic Allen--Cahn equation with
initial data close to a Heaviside function. They proved under an
appropriate scaling that the solution stays close to this shape, while the random perturbation creates a dynamic for the
single interface which is observed on a much faster time
scale than in the deterministic case. This has been also studied
 in \cite{weber2} via an invariant measure approach.
 The author therein, under certain assumptions,
proves exponential convergence towards a curve of minimizers of
the energy, and a concentration of the measure on configurations
with precisely one jump. In \cite{OWW14}, the authors
studied the competition between some energy functional that is
minimized for small noise strength, and they also investigate the entropy induced by a system of large size.

If the initial data involves more than one interfaces, it is believed
that these interfaces also exhibit a random movement which is much
quicker than in the deterministic case, while different interfaces
should annihilate when they meet \cite{fatkullin}, and the
limiting process is related to the Brownian (see \cite{fontes}
for formal arguments).

As far as the sharp interface limit of the stochastic Allen--Cahn
equation \eqref{AC} is concerned, we first mention the pioneering
work of Funaki \cite{Fun99}: the law of motion of the limit
interface is rigorously derived, for dimension $n=2$ and
convex initial interface $\Gamma _0$ and it is given by
\begin{equation}\label{mmc}
V=(n-1)\kappa+c \dot W_t,
\end{equation}
where V is the inward normal velocity of the inner interface
$\Gamma_t$, $\kappa$ is the mean curvature of $\Gamma_t$, $\dot W_t$ is
the white noise in $t$ (namely the singular limit of the mild
noise as $\ep \to 0$)  and $c$ denotes an identified constant.
Note that this motion law was derived under the assumption that
the initial data is well-prepared, in the sense that it depends on
$\ep$ in such a way that it is very close to the formal
asymptotics, i.e., $U_0\left(\frac{d(x,0)}{\ep}\right)$, where
$U_0(z)$ is the underlying one-dimensional travelling wave and
$d(\cdot,0)$ the signed distance function to $\Gamma _0$
\footnote{In this paper, a signed distance function $d(\cdot,t)$
to $\Gamma _t$ is always negative in the region enclosed by
$\Gamma _t$, and positive elsewhere.}.
Later, in \cite{weber1}, the classical result of \cite{Fun99}
was extended  to spatial dimensions greater than two without the
restriction of initial convexity.

The multi-dimensional stochastic Allen--Cahn
equation driven by a multiplicative noise is studied in \cite{rogweb}.
This noise is non-smooth in time and smooth in space (finite sum of time-dependent
Brownian motions, with coefficients deterministic functions of the
spatial variables).
The authors prove for $\ep$-dependent initial data, the tightness of solutions for
the sharp interface limit problem and show convergence to
phase-indicator functions. The existence and properties of such a
stochastic flow,
was first established in \cite{yip98}, in the context of
geometric measure theory. More precisely, in \cite{yip98} an
iterative scheme is constructed, and a sequence of sets with
randomly perturbed boundaries is introduced. The analysis in
\cite{rogweb} was based on energy estimates and is related to the
construction of  \cite{yip98}. In \cite{bebp}, a stochastic
Allen--Cahn equation is considered; the authors study its
large deviation asymptotics in a joint sharp interface and small
noise limit.

The space-time white noise driven Allen--Cahn equation is known to
be ill-posed in space dimensions greater than one, \cite{Wal86},
\cite{DPZ92}. Therefore, in \cite{hiweb}, a multi-dimensional
stochastic Allen--Cahn equation with mollified additive white
space-time noise is analyzed (finite sum of time-dependent Brownian
motions with finite noise strength). For regular $\ep$-independent
initial data, it is shown that as the mollifier is removed, the
solutions converge weakly to zero, independently of the initial
condition. If the noise strength converges to zero at a
sufficiently fast rate, then the solutions converge to those of
the deterministic equation.
A large deviation principle is discussed in \cite{HaWe:14}.

Considering stochastic models where the Allen--Cahn operator
appears, or stochastic sharp interface limit problems from phase
separation, we also refer to the recent results of
\cite{ABBK}, \cite{AKM}, \cite{AFK}, \cite{ABKs}. More specifically, in \cite{ABBK}, the
mass conserving Allen--Cahn equation with noise was
analyzed and the stochastic dynamics of a droplet's motion along
the boundary in dimension $2$ were derived. In \cite{AKM},
the authors established the stochastic existence and investigated
the regularity of solutions for the so-called
Cahn-Hilliard/Allen--Cahn equation with space-time white noise;
for the same problem, in \cite{AFK}, Malliavin calculus was
applied for the proof of existence of a density for the
stochastic solution, in dimension one. The stochastic sharp
interface limit for the Cahn-Hilliard equation with space-time
smooth in space noise has been presented in \cite{ABKs}.

\subsection{Motivation for the current work}
Our work stands in the framework of \cite{Fun99} and
\cite{weber1}, where the mild noises (MN1) and (MN2), of
subsection \ref{ss:assumptions}, have been initially introduced.
As mentioned before, in these works, it is shown that the sharp
interface limit of \eqref{AC} is motion driven by mean curvature
plus an additional stochastic forcing term. This holds true for
well-prepared initial data. However, whether or not this motion
law is valid for a large class of initial data has never been
studied. To answer this question, one has to study the {\it
generation of interface} in details. We analyze first the
solution's profile for short times, and show that layers of
thickness $\mathcal O(\ep)$ are rapidly formed. Then, considering
later times, we prove that for rather general initial data the
thickness of the layers remains of order $\mathcal O(\ep)$, and we
determine the shape of  the solution $u^\ep(x,t)$ {\it inside} the layers.

To do so, we shall rely on the results of \cite{ahm} and \cite{am}
for the deterministic Allen--Cahn equation
\begin{equation}\label{AC1}
\partial _t u= \Delta u + \frac{1}{\ep^2} \left(f(u)-\ep g^\ep(x,t)\right).
\end{equation}
The authors in \cite{ahm} showed that, for a rather general class
of initial data that are independent of $\ep$, the solution
$u^\ep(x,t)$ of \eqref{AC1} develops a steep internal layer within
a short time interval of $\mathcal O(\ep^2 \ln \ep)$.
Consequently, $u^{\ep}(x,t)$ lies between a pair of super- and
sub-solutions $u^+$, $u^-$ for $t_{\ep} \le t \le T$, whose
profiles are very close to the formal asymptotics of typical
fronts and are located within the distance of $\mathcal O(\ep)$
from each other. Since the fronts of both $u^+$, $u^-$ move by the
correct motion law with an error margin of $\mathcal O(\ep)$, so
does the front of $u^{\ep}$. This indicates that the layers of
$u^\ep(x,t)$ move by motion by mean curvature plus an additional
pressure term, and that the their thickness is $\mathcal O (\ep)$.
Recently, the authors in \cite{am} have found a way to explore the
profile of the solution $u^{\ep}(x,t)$ {\it inside} these layers. More
precisely, they have proved the validity of the principal term of
the formal asymptotic expansions for rather general initial data.

Our present analysis of the singular limit of problem \eqref{AC}
reveals, in particular, that the profile of the solutions
$u^\ep(x,t)$ is not altered by the mild noise, for both the
thickness of the layers (compare Theorem \ref{th:main} with
\cite{ahm}) and the profile {\it inside} the layers (compare Theorem
\ref{th:profile} with \cite{am}). The main difference with the
deterministic problem stands in a slight shift of the position of
the layers which occurs in the very early times. This will be
clarified in Section \ref{s:emergence}.

Let us underline that, while the perturbation term $g^\ep(x,t)$ in
\eqref{AC1} remains uniformly bounded as $\ep \to 0$ \cite{ahm},
in the present paper, we allow the perturbation $\xi^\ep(t)$ to
become singular as $\ep \to 0$, as can be seen in
\eqref{estimates} and \eqref{estimates2}. We therefore need to
modify our argument for the generation of interface (see Section
\ref{s:emergence}) and then to use the stochastic approach of
\cite{Fun99}, \cite{DLN}, \cite{weber1}. The latter is suitable
for perturbations $\xi^\ep(t)$ which behaves like white noise as
$\ep\rightarrow 0$, resulting to random dynamics in the limit, in
contrast with \cite{ahm}.

\section{On stochastic motion by mean curvature}\label{s:mmc}

Before stating our main results, we need to give a precise
definition of the motion law of the form \eqref{mmc} for the limit interface.
The interpretation of this motion law actually depends on the type
of noise under consideration, namely the (MN1) type noise and the
(MN2) type one mentioned earlier.


\subsection{Motion law for the (MN1) type noise}\label{ss:mmc-fun}

The interpretation of the motion law \eqref{mmc} for this type of noise
was clarified by Funaki \cite{Fun99}.  In this subsection we will adopt
his definition and first recall some of his results.  Note that
the interpretation of \eqref{mmc} in this sense holds
as long as
the random curve $\Gamma _t$ remains strictly convex and does
not touch the boundary $\partial \Omega$.


Let $c_0>0$ and $\alpha _0>0$ be  given constants (which will be
taken as in \eqref{def:c0} and \eqref{def:alphazero} in our
context). A strictly convex curve $\Gamma$ can be parametrized by
$\theta \in [0,2\pi)$ in terms of the Gauss map: the position $x$
on $\Gamma$  is denoted by $x(\theta)$ if the angle between a
fixed direction and the outward normal $n(x)$ at $x$ to $\Gamma$
is $\theta$. Denote by $\kappa=\kappa(\theta)$ the (mean) curvature of
$\Gamma$ at $x=x(\theta)$. Then the stochastic motion by mean
curvature dynamics
$$
V=\kappa +c_0\alpha _0 \dot W _t
$$
is defined through the nonlinear stochastic partial differential
equation for $\kappa=\kappa(\theta,t)$:
\begin{equation}\label{eq-kappa}
\partial _t \kappa=\kappa ^2 \partial _{\theta\theta}\kappa +
\kappa ^3+c_0\alpha _0\kappa ^2 \circ \dot W_t, \quad 0<t<
\sigma, \theta \in [0,2\pi),
\end{equation}
where $\circ$ means the Stratonovich stochastic integral and $\sigma=\lim _{N\to \infty} \sigma _N$.
Stopping times are defined by
$$
\sigma _N:=\inf \{t>0,\, \bar  \kappa _t >N \text{ or } \text{dist}(\Gamma _t,\partial \Omega)<1/N\}, \quad N> 0,
$$
where $\bar \kappa _t=\max_{\theta \in [0,2\pi)}\max\{\kappa(\theta,t),\kappa ^{-1}(\theta,t), \vert \partial _\theta \kappa (\theta,t)\vert\}$.
Indeed, once the mean curvature $\kappa (\theta,t)$ is obtained via \eqref{eq-kappa}, one can determine $\Gamma _t=\{x_t(\theta)\in \R^2\cong
\mathbb C, \, \theta \in [0,2\pi)\}$ by formula \cite[(1.10)]{Fun99} to which we refer for further details.

Also, we need to consider approximations of this motion as
follows. Let $\gamma _0^\ep$ (which will be taken as in
\eqref{def:shifted-initial-interface} in our context) be a
$C^{2,\alpha}$ hypersurface which is a slight shift of $\Gamma
_0$, in the sense that
$$
\gamma _0^\ep \to \Gamma _0  \text{ as } \ep \to 0, \text{ in the
$C^{2,\alpha}$ sense.}
$$
 Furthermore, we replace the forcing term $c_0 \alpha _0 \dot W_t$ by
$-\frac{c(\ep\xi^\ep(t))}{\ep}$, where $\delta\mapsto c(\delta)$
(which will be taken as in \eqref{n1n} in our context) is smooth
in a neighborhood of zero, satisfies $c(0)=0$ and $\partial _\delta c(0)=-c_0$. We
are therefore equipped with a family of hypersurfaces $(\gamma _t
^\ep)_{0\leq t<\sigma  ^{\ep}}$, starting from $\gamma _0 ^\ep$ and
evolving with the law
$$
V=\kappa -\frac{c(\ep \xi ^\ep(t))}{\ep} \quad\text{ on } \gamma
^\ep _t.
$$
Here we have $\sigma ^{\ep}=\lim _{N\to \infty} \sigma _N^{\ep}$, where
\begin{equation}
\label{analogous}
\sigma _N^{\ep}:=\inf \{t>0,\quad \bar  \kappa _t ^{\ep} >N
\text{ or } \text{dist}(\gamma ^{\ep} _t,\partial \Omega)<1/N\}, \quad N> 0,
\end{equation}
where
\begin{equation}
\label{def:kappabar}
\bar \kappa _t^{\ep}=\max_{\theta \in [0,2\pi)}\max\{\kappa^{\ep}
(\theta,t),(\kappa^{\ep}) ^{-1}(\theta,t), \vert \partial _\theta
\kappa ^{\ep}(\theta,t)\vert\},
\end{equation}
 with $\kappa ^{\ep}$ the mean curvature of $\gamma ^{\ep}_t$.

{} Since $-\frac{c(\ep\xi^\ep(t))}{\ep}\sim c_0\xi ^\ep(t)$ as $\ep \to 0$, and since $\xi  ^{\ep}(t)$
converges to $\alpha _0 \dot W_t$ in distribution sense, it is expected that the approximations
$(\gamma _t ^\ep)$ converge, in some sense, to $(\Gamma _t)$.
Using the martingale method such a convergence---see Corollary \ref{cor:funaki}
for a precise statement---is proved in \cite{Fun99}, when $\gamma _0 ^{\ep}=\Gamma _0$.
In particular, for all but countable many $N>0$ we have $\sigma _N^\ep \to \sigma _N$ as $\ep \to 0$.


\subsection{Motion law for the  (MN2) type noise}\label{ss:mmc-web}

The precise meaning of the motion law \eqref{mmc} in the context of the (MN2) type noise can be clarified by using the results of Dirr, Luckhaus and Novaga \cite{DLN}. In this subsection we summarize their results and apply them to our problem. More precisely, we refer to
\cite[Theorem 3.1]{DLN} for the existence result and to
\cite[Corollary 4.2]{DLN} for the estimate of the deviation from
the original problem, when the white noise is smoothly
approximated and when the initial hypersurface is slightly
shifted.

\medskip

Let $c_0>0$ be a given constant (which will be taken as in
\eqref{def:c0} in our context). Since the initial hypersurface
$\Gamma _0=\partial \Omega _0$ is of class $C^{2,\alpha}$, there
is a stopping time $\tau=\tau(\Gamma _0)=\tau(\omega,\Gamma _0)$
depending on the $C^{2,\alpha}$-norm of $\Gamma _0$, and a family
of hypersurfaces $(\Gamma _t)_{0\leq
t<\tau}=(\Gamma_t(\omega))_{0\leq t<\tau(\omega,\Gamma _0)}$ of
class $C^{2,\alpha}$, such that, for any $X_0\in \Gamma _0$, there
is a process $X(\cdot)$ with $X(t)=X(t,\omega) \in \Gamma
_t=\Gamma _t(\omega)$ for almost all $\omega\in \Omega_{prob}$
which solves the It\^o equation
$$
dX=\nu(X(t,\omega),t)(n-1)\kappa(X(t,\omega),t)dt+\nu(X(t,\omega),t)c_0dW,
\quad X(0)=X_0,
$$
where $\kappa(y,t)$ and $\nu(y,t)$ are respectively the mean
curvature and the inner normal at $y\in \Gamma _t$. This is the
sense we adopt for the motion law $$
V=(n-1)\kappa+c_0 \dot W_t,
$$
or
$dV=(n-1)\kappa dt+c_0 dW_t$, which we call the stochastic motion by mean curvature.

Also, we need to consider approximations of this motion as
follows. Let $\gamma _0^\ep$ (which will be taken as in
\eqref{def:shifted-initial-interface} in our context) be a
$C^{2,\alpha}$ hypersurface which is a slight shift of $\Gamma
_0$, in the sense that
$$
\gamma _0^\ep \to \Gamma _0  \text{ as } \ep \to 0, \text{ in the
$C^{2,\alpha}$ sense.}
$$
Furthermore, we replace the forcing term $c_0 \dot W_t$ by
$-\frac{c(\ep\xi^\ep(t))}{\ep}$, where $\delta\mapsto c(\delta)$
(which will be taken as in \eqref{n1n} in our context) is smooth
in a neighborhood of zero, satisfies $c(0)=0$ and $\partial
_\delta c(0)=-c_0$. We are therefore equipped with a family of
hypersurfaces $(\gamma _t ^\ep)_{0\leq t<\tau  ^{\ep}}$, starting
from $\gamma _0 ^\ep$ and evolving with the law
$$
V=(n-1)\kappa -\frac{c(\ep \xi ^\ep(t))}{\ep} \quad\text{ on } \gamma
^\ep _t.
$$
{}From the definition of the noise $\xi^\ep(t)$ as the derivative
of an approximated Brownian motion $W^\ep(t)$ (by convolution with
a mollifier) and the above assumptions, we have---see \cite[Lemma
3.3] {weber1}---that, for any $T>0$, the random
functions $t\mapsto \int _0^t -\frac{c(\ep\xi^\ep(s))}{\ep}ds$
converge almost surely to $t\mapsto c_0W(t)$ in
$C^{0,\alpha}([0,T])$ for any $0<\alpha<\frac 12$. This enables to
quote \cite[Corollary 4.2] {DLN}: there is a time $T>0$ such that
\begin{equation}
\label{cor-dirr} \sup _{0\leq t\leq T} \Vert d(t,x)-d^{\ep}(t,x)
\Vert _{C ^{2,\alpha}} \to 0, \quad\text{ as } \ep \to 0,
\end{equation}
where $d(\cdot,t)$, $d^{\ep}(\cdot,t)$ denote the signed distance
functions to $\Gamma _t$, $\gamma _t ^{\ep}$ respectively.

\section{Main results}\label{s:main}

Our first main result is to localize the transitions layers of the
solution of the stochastic Allen--Cahn equation in a $\mathcal
O(\ep)$ neighborhood of a family of hypersurfaces $(\tildet)$,
which is defined as follows. The initial hypersurface $\tildezero$
is defined in \eqref{def:shifted-initial-interface} and is a
slight shift of the initial interface $\Gamma _0$ defined in
\eqref{def:initial-interface} (we hope that the reason for such a
shift will become transparent for the reader in Section
\ref{s:emergence}).

Let the family $(\tildet)$ evolve with the law of motion
\begin{equation}\label{law-motion}
V=(n-1)\kappa-\frac{c(\ep \xi ^\ep (t))}\ep \quad \text{ on } \tildet,
\end{equation}
where $c(\delta)$ is the speed of the bistable traveling wave
$m(z;\delta)$ defined in \eqref{n1n}. Recalling Section \ref{s:mmc}, if the noise is of the
(MN1) type then this family is defined for $0<t\leq \sigma _N^\ep$, $N>0$ arbitrary,
whereas if the noise is of the (MN2) type this family is defined for $0<t\leq \tau^\ep$.
In the latter case, let $T>0$ be given as in \eqref{cor-dirr}. Also $\tildeint$ denotes the
region enclosed by $\tildet$.

\begin{theorem}[Emergence and motion of $\mathcal O(\ep)$ Allen--Cahn
layers]\label{th:main} Let the nonlinearity $f$ and the initial
data $u_0$ satisfy the assumptions of subsection
\ref{ss:assumptions}, and the mild noise be of (MN1) or (MN2)
type. In the former case, let $N>0$ be given. Let $u^\ep(x,t)$ be
the solution of \eqref{AC}. Let $\eta \in (0,\eta _0:= \min
(a-a_-,a_+ -a))$ be arbitrary and define $\mu$ as the derivative
of $f(u)$ at the unstable zero $u=a$, that is
\begin{equation}\label{def:mu}
\mu=f'(a)>0.
\end{equation}

Then there exist positive constants $\ep _0 $ and $C$ such that,
for all $\,\ep \in (0,\ep _0)$ and for all $t^\ep\leq t \leq \sigma ^\ep _N$---if noise is of
(MN1) type---or all $t^\ep \leq t \leq
T$---if noise is of (MN2) type---where
$$
t^\ep:=\mu _\ep  ^{-1} \ep ^2 |\ln \ep|, \text{ with } \mu_\ep \to
\mu \text{ as } \ep \to 0,
$$
 we have
\begin{equation}\label{resultat}
u^\ep(x,t) \in
\begin{cases}
\,[a_--\eta,a_++\eta]\quad\text{if}\quad
x\in\mathcal N_{C\ep}(\tildet)\\
\,[a_--\eta,a_-+\eta]\quad\text{if}\quad
x\in\tildeint\setminus\mathcal N_{C\ep}(\tildet)\\
\,[a_+-\eta,a_++\eta]\quad\text{if}\quad
x\in(\tildeext)\setminus\mathcal N_{C\ep}(\tildet),
\end{cases}
\end{equation}
where $\mathcal N _r(\tildet):=\{x\in \Omega:\, dist(x,\Gamma
_t)<r\}$ denotes the $r$-neighborhood of $\tildet$.
\end{theorem}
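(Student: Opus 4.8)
The plan is to combine a \emph{generation-of-interface} argument for short times $0\le t\le t^\ep$ with a \emph{propagation} argument based on the squeezed traveling-wave sub/super-solutions of \cite{ahm} for $t^\ep\le t\le \sigma_N^\ep$ (resp. $\le T$). The key observation is that after the time change/rescaling used in \cite{Fun99,weber1}, the stochastic equation \eqref{AC} is essentially a deterministic Allen--Cahn equation of the form \eqref{AC1} along each sample path, with forcing $g^\ep(x,t)$ replaced by a spatially homogeneous term governed by $\xi^\ep(t)$; the point of Proposition~\ref{prop:Weber-noise} and of \eqref{estimates} is that this term, although no longer uniformly bounded, is still controlled by a negative power of $\ep$ small enough (because $\gamma_1<1/3$, resp. $\gamma_2<2/3$) that the generation argument of \cite{ahm} survives.

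First I would carry out the \textbf{generation of interface} on the interval $[0,t^\ep]$. Following \cite{ahm}, I construct sub/super-solutions of \eqref{AC} of the form $w^\pm(x,t)=Y\!\left(\tfrac{t}{\ep^2};u_0(x)\pm\text{(small shift)}\right)\pm (\text{correction})$, where $Y(\tau;\zeta)$ solves the ODE $\dot Y=f(Y)$, $Y(0)=\zeta$; here the extra noise term $\tfrac1\ep\xi^\ep(t)$ is absorbed into the ODE comparison, which is where the shift $\gamma_0^\ep$ of the initial interface $\Gamma_0$ originates --- the random forcing acting over time $t^\ep=\mathcal O(\ep^2|\ln\ep|)$ produces a net displacement of the interface of order $\ep^2|\ln\ep|\cdot\ep^{-1-\gamma}\to 0$ (using \eqref{estimates} or \eqref{estimates2} together with $\gamma_1<1/3$, $\gamma_2<2/3$), hence a small but nonzero shift of $\Gamma_0$ to $\gamma_0^\ep$ as in \eqref{def:shifted-initial-interface}. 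The outcome of this step is that at $t=t^\ep$, $u^\ep(\cdot,t^\ep)$ already lies in the three-case picture of \eqref{resultat} with $\tildet$ replaced by $\gamma_0^\ep$, for a suitable $C$, and with the layer profile squeezed to width $\mathcal O(\ep)$.

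Next I would run the \textbf{propagation} argument on $[t^\ep,\sigma_N^\ep]$ (resp. $[t^\ep,T]$). Taking the configuration at $t=t^\ep$ as initial data, I build the pair of sub/super-solutions $u^\pm$ from \cite{ahm}: these have the form $U_0\!\left(\tfrac{\pm d^\ep(x,t)+\text{shift}}{\ep}\right)$ up to correction terms, where $d^\ep$ is the signed distance to $\gamma_t^\ep$ (equivalently to $\tildet$), and $U_0$ is the standing/traveling wave profile. The whole construction carries over because $\gamma_t^\ep$ evolves exactly by the law \eqref{law-motion}, which is what the normal-velocity correction inside the ansatz requires; here I invoke the definitions of Section~\ref{s:mmc} for the existence of $\tildet$ up to $\sigma_N^\ep$ or $\tau^\ep\ge T$ (via \eqref{cor-dirr}). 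The comparison principle then sandwiches $u^\ep$ between $u^-$ and $u^+$ on the whole interval, and since $u^\pm$ satisfy the three-case estimate \eqref{resultat} by construction of their profiles, so does $u^\ep$. A minor point is checking that the constants $C,\ep_0$ can be chosen uniformly in $t$ up to the stopping time; this follows since all estimates depend only on $N$ (through bounds on $\bar\kappa_t^\ep$) or on $T$.

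The \textbf{main obstacle} is the generation step: one must verify that the noise term $\tfrac1\ep\xi^\ep(t)$, which blows up like $\ep^{-1-\gamma}$ (times $|\log\ep|^{1/2}$ in the (MN2) case), does not destroy the $\mathcal O(\ep)$ thickness of the freshly created layer during the very short window $[0,t^\ep]$. Since the relevant time scale is $\ep^2|\ln\ep|$ and $\int_0^{t^\ep}\ep^{-1-\gamma_1}\,dt=\mathcal O(\ep^{1-\gamma_1}|\ln\ep|)\to 0$ (and similarly with $\gamma_2/2<1/3$ in place of $\gamma_1$), the cumulative effect is a vanishing perturbation, but it must be tracked carefully through the ODE comparison to control both the vertical displacement of $Y$ and the induced horizontal shift of the zero level set; this is precisely the place where the constraints \eqref{gamma1} and \eqref{gamma2} are used, and where the definition of $\gamma_0^\ep$ must be tuned to match the accumulated drift. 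Once this is done, the rest is a standard (pathwise) application of the comparison principle, so the probabilistic content enters only through Proposition~\ref{prop:Weber-noise} (or \eqref{estimates}) and through the a.s.\ existence of the limit flow $\tildet$ recalled in Section~\ref{s:mmc}.
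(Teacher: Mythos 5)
Your overall architecture (generation on $[0,t^\ep]$, then propagation by squeezed traveling-wave sub/super-solutions and the comparison principle, all pathwise) is the paper's approach. But there is a genuine gap in how you control the noise during the generation step, and it is precisely the step you yourself flag as the main obstacle. You propose to bound the cumulative effect of the forcing by $\int_0^{t^\ep}\ep^{-1}|\xi^\ep(s)|\,ds=\mathcal O(\ep^{1-\gamma}|\ln\ep|)$ and to conclude that this vanishing drift determines the shift $\tildezero$ of $\Gamma_0$. This only localizes the emerging layer to within $\mathcal O(\ep^{1-\gamma}|\ln\ep|)$ of $\Gamma_0$, which is much larger than $\ep$; it cannot yield the $\mathcal O(\ep)$ thickness around a \emph{specific} hypersurface $\tildezero$ claimed in \eqref{resultat}. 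Note also that $\int_0^{t^\ep}\ep^{-1-\gamma}\,dt\to 0$ holds for any $\gamma<1$, so this estimate cannot be where the constraint $\gamma_1<1/3$ (resp.\ $\gamma_2<2/3$) is really used. The missing idea is the paper's observation \eqref{crucial}: by the mean value theorem and the \emph{derivative} bound $|\dot\xi^\ep|\le M\ep^{-3\gamma_1}$ (resp.\ \eqref{estimates2}), one has $\xi^\ep(t)=\xi^\ep(0)+\mathcal O(\ep^{2-3\gamma_1}|\ln\ep|)=\xi^\ep(0)+o(\ep)$ uniformly on $[0,t^\ep]$ --- this is exactly where $\gamma_1<1/3$ enters. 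One then freezes the constant part into the nonlinearity, $f^\ep(u):=f(u)+\ep\xi^\ep(0)$, whose unstable zero $a^\ep=a+o(\ep^{1-\gamma})$ defines $\tildezero=\{u_0=a^\ep\}$, and the residual perturbation $g^\ep(t)=\xi^\ep(0)-\xi^\ep(t)$ is $o(\ep)$, so the ODE comparison of \cite{ahm} applies with $\mathcal O(\ep)$ resolution. Without this decomposition the random forcing genuinely wanders by $\mathcal O(\ep^{-\gamma})$ in amplitude and the unstable equilibrium of the perturbed ODE is only pinned down to $\mathcal O(\ep^{1-\gamma})$.

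A secondary, related omission concerns the propagation step: writing the sub/super-solutions around the fixed profile $U_0$ cannot absorb the unbounded forcing $\tfrac1\ep\xi^\ep(t)$. The paper uses $m\bigl(\tfrac{d^\ep\pm\ep p(t)}{\ep};\ep\xi^\ep(t)\bigr)\pm q(t)$, i.e.\ the traveling wave of the $\delta$-perturbed nonlinearity evaluated at $\delta=\ep\xi^\ep(t)$, so that the $\mathcal O(\ep^{-2})$ and $\mathcal O(\ep^{-1})$ terms cancel via \eqref{n1n} and \eqref{recast}; the price is the extra term $\ep\dot\xi^\ep(t)\,m_\delta$, which is again controlled only thanks to the derivative bounds and $\gamma_1<1/3$. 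Your sketch does not account for this term, which is the genuinely new difficulty relative to the bounded-perturbation setting of \cite{ahm}.
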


As mentioned in the introduction and clear from the above theorem,
the deterministic $\mathcal O(\ep)$ thickness of the layers of the
solutions $u^\ep(x,t)$---as estimated in \cite{ahm}---is not
altered by the mild noise.

The above theorem enables to generalize the convergence results of
\cite{Fun99} and \cite{weber1}---which are concerned with
well-prepared initial data---to rather general data.

\begin{corollary}[Extension of Funaki \cite{Fun99} to general data]
\label{cor:funaki}
Let the nonlinearity $f$ and the initial data $u_0$ satisfy the
assumptions of subsection \ref{ss:assumptions}. Let the mild
noise be of (MN1) type. Assume further that $\xi ^\ep(0)=0$. Let
$u^\ep(x,t)$ be the solution of \eqref{AC}. Assume further that
$n=2$ and that $\Omega _0$ is convex. Following subsection
\ref{ss:mmc-fun}, let
$(\Gamma _t)_{0\leq t<\sigma:=\lim _{N\to\infty} \sigma _N}$
evolve by
$$
V=\kappa+(c_0 \alpha _0) \dot W_t,
$$
with $c_0>0$ the constant defined in \eqref{def:c0}, and
\begin{equation}
\label{def:alphazero}
\alpha_0:=\sqrt{2\int _0^{\infty}E[\xi _0 \xi_t]dt}.
\end{equation}
 Then the  random motion of curves $(\gamma _t
 ^\ep)_{0\leq t <\sigma ^\ep:=\lim _{N\to \infty}\sigma _N ^\ep}$
 defined in subsection \ref{ss:mmc-fun}
 satisfies the following two conditions.
 \begin{itemize}
 \item [$(i)$] Let $N>0$ be given. For $0\leq t
<\sigma ^\ep _N$, let $x\mapsto \Phi^\ep(x,t)$ be the step
function with value $a_-$ in the region enclosed by $\gamma ^\ep
_t$ and $a_+$ elsewhere. Then
$$
\sup _{t^\ep \leq t \leq \sigma ^\ep _N} \Vert u^\ep(\cdot,
t)-\Phi^\ep(\cdot,t)\Vert _{L^2(\Omega)} \to 0 \quad \text{ in
probability, as } \ep \to 0,
$$
where $t^\ep$ is as in Theorem \ref{th:main}.

 \item[$(ii)$] $\gamma _t ^\ep$ converges to $\Gamma _t$ as $\ep
 \to 0$ in the following sense: for any $T>0$ and all but
 countable many $N\in \R ^+$, the joint distribution of $(\sigma
 _N ^\ep,\Gamma ^\ep _{t\wedge\sigma ^\ep _N})$ on $\R ^+ \times
 C([0,T],C([0,2\pi),\R ^2))$ converges, as $\ep \to 0$, to that of
 $(\sigma _N,\Gamma _{t\wedge \sigma _N})$.
 \end{itemize}
\end{corollary}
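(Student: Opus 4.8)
The plan is to combine three ingredients: Theorem~\ref{th:main} (localization of the layers of $u^\ep$ in an $\mathcal O(\ep)$-neighborhood of $\tildet = \gamma_t^\ep$), the convergence result of Funaki \cite{Fun99} for the approximating curves $(\gamma_t^\ep)$ to the limit curve $(\Gamma_t)$ under the martingale method, and the identification of the rescaled drift $-c(\ep\xi^\ep(t))/\ep \sim c_0\xi^\ep(t)$ together with the central-limit-type convergence $\xi^\ep(t) \rightharpoonup \alpha_0 \dot W_t$ that produces the constant $\alpha_0$ in \eqref{def:alphazero}. The hypothesis $\xi^\ep(0)=0$ guarantees, through \eqref{n1n}, that $c(\ep\xi^\ep(0))=0$, so that the shifted initial interface $\tildezero=\gamma_0^\ep$ starts with zero normal velocity correction; this is what makes $\gamma_0^\ep \to \Gamma_0$ in the $C^{2,\alpha}$ sense and allows us to take, in fact, $\gamma_0^\ep = \Gamma_0$ in the setting of subsection~\ref{ss:mmc-fun}, so that Funaki's theorem applies verbatim.

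For part $(i)$, I would fix $N>0$ and work on the (random) time interval $[t^\ep, \sigma_N^\ep]$. Split the domain as $\Omega = \mathcal N_{C\ep}(\gamma_t^\ep) \cup (\tildeint\setminus\mathcal N_{C\ep}(\gamma_t^\ep)) \cup ((\tildeext)\setminus\mathcal N_{C\ep}(\gamma_t^\ep))$. On the last two pieces, Theorem~\ref{th:main} gives $|u^\ep(x,t)-\Phi^\ep(x,t)| \le \eta$ pointwise, where $\Phi^\ep$ equals $a_-$ inside and $a_+$ outside (note: Theorem~\ref{th:main} is stated with the wells $a_\pm$, while the Corollary's $\Phi^\ep$ uses the same values, so the definitions match); hence the $L^2$-contribution of those pieces is at most $\eta|\Omega|^{1/2}$. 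On the tubular neighborhood $\mathcal N_{C\ep}(\gamma_t^\ep)$, both $u^\ep$ and $\Phi^\ep$ are bounded (by $C_0$ via the comparison principle, and by $\max(|a_-|,|a_+|)$ respectively), while the Lebesgue measure of the neighborhood is $\mathcal O(\ep)$ uniformly on $[t^\ep,\sigma_N^\ep]$ because $\gamma_t^\ep$ has curvature bounded by $N$ there — this is precisely why the definition of $\sigma_N^\ep$ in \eqref{analogous}--\eqref{def:kappabar} controls $\bar\kappa_t^\ep$. Therefore that contribution is $\mathcal O(\sqrt\ep)$. Combining, $\sup_{t^\ep\le t\le\sigma_N^\ep}\|u^\ep(\cdot,t)-\Phi^\ep(\cdot,t)\|_{L^2(\Omega)} \le \eta|\Omega|^{1/2} + C'\sqrt\ep$, and since $\eta\in(0,\eta_0)$ was arbitrary we first send $\ep\to 0$ and then $\eta\to 0$; the bound is deterministic on the event that Theorem~\ref{th:main} holds, which has probability tending to $1$, giving convergence in probability.

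Part $(ii)$ is a direct quotation: having arranged $\gamma_0^\ep=\Gamma_0$, the family $(\gamma_t^\ep)$ evolving by $V=\kappa - c(\ep\xi^\ep(t))/\ep$ is exactly the approximation studied in subsection~\ref{ss:mmc-fun} and in \cite{Fun99}, and the stated convergence of the joint law of $(\sigma_N^\ep, \Gamma^\ep_{t\wedge\sigma_N^\ep})$ to that of $(\sigma_N,\Gamma_{t\wedge\sigma_N})$ for all but countably many $N$ is Funaki's result; here the convexity of $\Omega_0$ and $n=2$ are used so that the Gauss-map parametrization and the curvature SPDE \eqref{eq-kappa} are available, and the constant $c_0\alpha_0$ arises from $-c(\ep\xi^\ep(t))/\ep \to c_0\xi^\ep(t)$ and the invariance principle for the strongly mixing stationary process $\xi_t$, whose asymptotic variance is $\alpha_0^2 = 2\int_0^\infty E[\xi_0\xi_t]\,dt$ by \eqref{def:alphazero}. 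The main obstacle is conceptual rather than computational: one must be careful that the two convergences in $(i)$ and $(ii)$ live on the same probabilistic footing, i.e. that the event of validity of Theorem~\ref{th:main} and the martingale-method convergence of $(\gamma_t^\ep)$ can be combined, and that sending $N\to\infty$ (hence $\sigma_N^\ep\to\sigma^\ep$) is compatible with the order of limits; this is handled by noting that $(i)$ holds for each fixed $N$ and that $\sigma_N^\ep\nearrow\sigma^\ep$, so no uniformity in $N$ is claimed.
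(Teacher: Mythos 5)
Your proposal follows essentially the same route as the paper: observe that $\xi^\ep(0)=0$ forces $\gamma_0^\ep=\Gamma_0$, then combine Theorem~\ref{th:main} with Funaki's martingale-method convergence of the approximating curves $(\gamma_t^\ep)$; your $L^2$ decomposition for part $(i)$ is precisely the computation the paper leaves implicit by deferring to \cite{Fun99}, and part $(ii)$ is, as you say, a direct quotation once the initial curves coincide. One correction of mechanism: the identity $\gamma_0^\ep=\Gamma_0$ does not follow from $c(\ep\xi^\ep(0))=0$ (that is a statement about the initial normal velocity and would not constrain the initial curve); it follows from the definition \eqref{def:shifted-initial-interface}, since $\xi^\ep(0)=0$ makes $f^\ep=f$ in \eqref{def:fep}, hence $a^\ep=a$ and $\gamma_0^\ep=\{u_0=a^\ep\}=\{u_0=a\}=\Gamma_0$, which is exactly the point the paper makes; the conclusion you draw is nonetheless correct and the rest of the argument stands.
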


\begin{corollary} [Extension of Weber \cite{weber1} to general data]\label{cor:weber}
Let the nonlinearity $f$ and the initial
data $u_0$ satisfy the assumptions of subsection
\ref{ss:assumptions}. Let the mild noise be of (MN2) type. Let
$u^\ep(x,t)$ be the solution of \eqref{AC}. Following subsection
\ref{ss:mmc-web}, let $(\Gamma _t)_{0\leq t < \tau(\Gamma _0)}$
evolve by
$$
dV=(n-1)\kappa dt +c_0 dW_t,
$$
with $c_0>0$ the constant defined in \eqref{def:c0}. For $0\leq t
<\tau(\Gamma _0)$, let $x\mapsto \Phi(x,t)$ be the step function
with value $a_-$ in the region enclosed by $\Gamma _t$ and $a_+$
elsewhere. Let $T>0$ be as in \eqref{cor-dirr}.

Then
$$
\sup _{t^\ep \leq t \leq T} \Vert u^\ep(\cdot,
t)-\Phi(\cdot,t)\Vert _{L^2(\Omega)} \to 0 \quad \text{ almost
surely, as } \ep \to 0,
$$
where $t^\ep$ is as in Theorem \ref{th:main}.
\end{corollary}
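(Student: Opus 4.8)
The plan is to combine Theorem \ref{th:main} (with noise of (MN2) type) with the convergence statement \eqref{cor-dirr} for the approximating interfaces, exactly as one passes from the deterministic $\mathcal O(\ep)$-layer estimate of \cite{ahm} to the sharp interface limit. First I would fix the time $T>0$ furnished by \eqref{cor-dirr}, so that the signed distance functions $d^\ep(\cdot,t)$ to $\gamma_t^\ep=\tildet$ converge in $C^{2,\alpha}$, uniformly on $[0,T]$, to the signed distance function $d(\cdot,t)$ to $\Gamma_t$, almost surely. By Theorem \ref{th:main}, on the time interval $[t^\ep,T]$ the solution $u^\ep(\cdot,t)$ is $\eta$-close to $a_\mp$ outside the tube $\mathcal N_{C\ep}(\tildet)$ and trapped in $[a_--\eta,a_++\eta]$ inside it. Hence, writing $\Phi^\ep(x,t)$ for the step function equal to $a_-$ on $\tildeint$ and $a_+$ on $\tildeext$, one has the pointwise bound $|u^\ep(x,t)-\Phi^\ep(x,t)|\leq \eta$ outside $\mathcal N_{C\ep}(\tildet)$ and $|u^\ep(x,t)-\Phi^\ep(x,t)|\leq |a_+-a_-|+\eta$ inside it; since the tube has $n$-dimensional Lebesgue measure $O(\ep)$ (uniformly in $t\leq T$, because the $\gamma_t^\ep$ have uniformly bounded $C^{2,\alpha}$-norm by \eqref{cor-dirr}), squaring and integrating gives
\begin{equation}\label{step-eta}
\sup_{t^\ep\leq t\leq T}\|u^\ep(\cdot,t)-\Phi^\ep(\cdot,t)\|_{L^2(\Omega)}^2\leq \eta^2|\Omega|+C'\ep,
\end{equation}
for a constant $C'$ depending only on the data and on $T$. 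Letting first $\ep\to 0$ and then $\eta\to 0$ in \eqref{step-eta} shows that $u^\ep(\cdot,t)$ and $\Phi^\ep(\cdot,t)$ have the same $L^2$-limit, almost surely, uniformly for $t\in[t^\ep,T]$.

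It remains to replace $\Phi^\ep$ by $\Phi$, i.e.\ to show $\sup_{0\leq t\leq T}\|\Phi^\ep(\cdot,t)-\Phi(\cdot,t)\|_{L^2(\Omega)}\to 0$ almost surely. Since $\Phi^\ep-\Phi$ equals $\pm(a_+-a_-)$ on the symmetric difference $\tildeint\triangle\Omega_t$ (where $\Omega_t$ is the region enclosed by $\Gamma_t$) and vanishes elsewhere, this reduces to showing that $|\tildeint\triangle\Omega_t|\to 0$, uniformly in $t\leq T$. But $\tildeint=\{d^\ep(\cdot,t)<0\}$ and $\Omega_t=\{d(\cdot,t)<0\}$, and the $C^{2,\alpha}$ (in particular $C^0$) convergence $d^\ep(\cdot,t)\to d(\cdot,t)$ from \eqref{cor-dirr}, together with the fact that the limiting interface $\Gamma_t$ has zero Lebesgue measure and $|\nabla d(\cdot,t)|=1$ near $\Gamma_t$ (so the level set $\{|d(\cdot,t)|<r\}$ has measure $O(r)$, uniformly for $t\leq T$), forces the symmetric difference to be contained in a tubular neighborhood $\{|d(\cdot,t)|<\|d^\ep(\cdot,t)-d(\cdot,t)\|_{C^0}\}$ of vanishing measure. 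This yields
\begin{equation}\label{step-phi}
\sup_{0\leq t\leq T}\|\Phi^\ep(\cdot,t)-\Phi(\cdot,t)\|_{L^2(\Omega)}\to 0\quad\text{almost surely, as }\ep\to 0.
\end{equation}
Combining \eqref{step-eta}, the subsequent limit $\eta\to 0$, and \eqref{step-phi} via the triangle inequality in $L^2(\Omega)$ gives the claimed convergence $\sup_{t^\ep\leq t\leq T}\|u^\ep(\cdot,t)-\Phi(\cdot,t)\|_{L^2(\Omega)}\to 0$ almost surely.

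The only genuinely delicate point is the almost-sure, uniform-in-$t$ control of the measures of the thin tubes $\mathcal N_{C\ep}(\tildet)$ and $\tildeint\triangle\Omega_t$; everything else is a routine interpolation between the pointwise layer bounds of Theorem \ref{th:main} and the geometric convergence \eqref{cor-dirr}. I expect the uniformity to follow directly from the stated $C^{2,\alpha}$-convergence, since that convergence is quantitative and uniform on $[0,T]$ by construction of $T$ in \cite[Corollary 4.2]{DLN}; in particular the family $(\gamma_t^\ep)_{0\leq t\leq T}$ stays in a fixed compact set of $C^{2,\alpha}$ hypersurfaces, which provides the uniform bound on tube volumes. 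Note that, unlike Corollary \ref{cor:funaki}, no stopping-time truncation and no convexity or dimension restriction is needed here, because \eqref{cor-dirr} already delivers a deterministic (in $t$) time horizon $T$ on which the approximating flows converge almost surely.
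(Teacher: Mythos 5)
Your proposal is correct and follows essentially the same route as the paper, which simply states that the corollary follows by combining Theorem \ref{th:main} with the estimate \eqref{cor-dirr} and reproducing the arguments of Weber's proof of his Theorem 1.1; your write-up is a faithful expansion of exactly that combination (layer estimate plus $O(\ep)$ tube volume to compare $u^\ep$ with $\Phi^\ep$, then the $C^0$ convergence of signed distance functions to compare $\Phi^\ep$ with $\Phi$).
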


By performing formal asymptotic expansions, see \cite[Section
2]{ahm} for more details, it is suspected that, close to the limit
interface $\Gamma _t$, the solution is approximated by
\begin{equation}\label{ansatz}
u^\ep(x,t)\sim U_0\left(\frac{d(x,t)}{\ep}\right)+\cdots,
\end{equation}
with $d(\cdot,t)$  the signed distance function to $\Gamma _t$,
and $U_0(z)$  the unique solution (whose existence is guaranteed
by the integral condition \eqref{equalwell}) of the stationary
problem
\begin{equation}\label{eq-phi}
 \left\{\begin{array}{ll}
 {U_0} '' +f(U_0)=0 \vspace{2pt}\\
 U_0(-\infty)= a_-\,,\quad U_0(0)=a\,,\quad U_0(\infty)= a_+\,.
 \end{array} \right.
\end{equation}
This represents the first approximation of the profile of a
transition layer around the interface observed in the stretched
coordinates. As recently proved in \cite{am} for the deterministic
case, we are actually able to prove the validity of the first term
of expansion \eqref{ansatz} for the stochastic case under
consideration.

Let us define the level surface of the solution $u^\ep$
\begin{equation}\label{level-sets}
\Gamma _t ^\ep:=\{x\in \Omega:\,u^\ep(x,t)=a\}
\end{equation}
and the {\it signed distance function associated with
$\Gamma^\ep$} by
\begin{equation}\label{eq:dist-e}
 \overline{d^\ep} (x,t):=
 \begin{cases}
 -&\hspace{-10pt} \mbox{dist}(x,\Gamma^\ep _t) \quad \text{if }
 u^\ep(x,t)<a \\
 &\hspace{-10pt} \mbox{dist}(x,\Gamma^\ep _t)\quad\text{if }
 u^\ep(x,t)>a\,.
 \end{cases}
\end{equation}
Recall that if noise is of the (MN2) type, then $T>0$ was defined in
\eqref{cor-dirr}. To unify the notation in the following, if noise is of the (MN1) type, then,
for a given $N>0$, we select $0<T<\sigma _N$ (see subsection \ref{ss:mmc-fun}). It therefore
follows from \eqref{resultat} that $\Gamma _t ^\ep \subset
\mathcal N _{C\ep}(\gamma _t ^\ep)$ for all $t^\ep \leq t \leq T$,
so that
\begin{equation}\label{dist-ep}
\vert  \overline{d^\ep} (x,t)-d^\ep(x,t)\vert \leq C \ep \quad
\forall (x,t)\in \overline \Omega\times [t^\ep,T], 0<\ep<<1.
\end{equation}

\begin{theorem}[Profile in the layers]\label{th:profile} Let the
assumptions of Theorem \ref{th:main} hold. Fix $\rho >1$ and
$0<T'<T$. Then
\begin{enumerate}
\item [{\rm (i)}] If $\ep >0$ is small enough then, for any
$t\in[\rho t^\ep,T']$, the level set $\Gamma _t ^\ep$ is a smooth
hypersurface and can be expressed as a graph over $\gamma _t
^\ep$. \item[{\rm (ii)}] We have
\begin{equation}\label{lim-d-epsilon}
\lim_{\ep \to 0}\; \sup_{\rho t^\ep \leq t\leq T',\, x\in \bar
\Omega} \left |u^\ep(x,t)-U_0\left(\frac
{\overline{d^\ep}(x,t)}{\ep}\right)\right|=0\,,
\end{equation}
where $\overline{d^\ep}$ denotes the signed distance function
associated with $\Gamma^\ep$.
\end{enumerate}
\end{theorem}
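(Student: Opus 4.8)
The plan is to transfer the deterministic result of \cite{am} to the present stochastic setting by exploiting the fact, established in Theorem~\ref{th:main}, that $u^\ep$ is trapped between a pair of sub- and super-solutions whose profiles are squeezed travelling waves, and that the driving perturbation enters only through the time-dependent term $\xi^\ep(t)$. First I would fix a sample point $\omega\in\Omega_{prob}$ outside a null set, so that the bounds \eqref{estimates} (for (MN1)) or \eqref{estimates2} (for (MN2)) hold and so that the convergence \eqref{cor-dirr} (or its (MN1) analogue from Corollary~\ref{cor:funaki}) is in force; for such $\omega$ the equation \eqref{AC} becomes a deterministic equation of the form \eqref{AC1} with $g^\ep(x,t)=-\xi^\ep(t)$, except that now $g^\ep$ is \emph{not} uniformly bounded in $\ep$. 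The first technical step is therefore to check that the arguments of \cite{am} survive when $\|g^\ep(\cdot,t)\|_{C^0}=\mathcal O(\ep^{-\gamma})$ with $\gamma<1/3$ (resp.\ $\gamma<2/3$ after the $\vert\log\ep\vert^{1/2}$ correction), using that after the generation time $t^\ep=\mu_\ep^{-1}\ep^2|\ln\ep|$ the relevant smallness is measured by $\ep\,g^\ep=\mathcal O(\ep^{1-\gamma})\to 0$, exactly the scale at which the travelling wave $m(z;\ep\xi^\ep(t))$ of \eqref{n1n} is a genuine perturbation of $U_0$.

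Next I would carry out the construction of refined sub- and super-solutions inside the layer. Building on the pair $u^\pm$ from \cite{ahm} already used in Theorem~\ref{th:main}, one refines them in the inner ($\mathcal O(\ep)$) region by replacing the leading profile $U_0(d/\ep)$ with the squeezed travelling wave $m\bigl(d^\ep(x,t)/\ep;\,\ep\xi^\ep(t)\bigr)$ and adding correction terms in powers of $\ep$ as in \cite[Section~3]{am}; the signed distance $d^\ep(\cdot,t)$ to $\gamma_t^\ep$ obeys the motion law \eqref{law-motion}, so the $\mathcal O(1/\ep)$ terms in $\partial_t u\mp\Delta u\mp\ep^{-2}f(u)\mp\ep^{-1}\xi^\ep$ cancel by the very definition of $c(\ep\xi^\ep)$ as the wave speed. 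The error one must absorb involves $\partial_t\bigl(\ep\xi^\ep(t)\bigr)=\ep\,\dot\xi^\ep(t)$, which by \eqref{estimates}/\eqref{estimates2} is $\mathcal O(\ep^{1-3\gamma_1})$ (resp.\ $\mathcal O(\ep^{1-3\gamma_2/2}\vert\log\ep\vert^{1/2})$); the restrictions $\gamma_1<1/3$ and $\gamma_2<2/3$ are precisely what make this term $o(1)$, hence controllable by the spectral gap of the linearized operator around $m$. Standard parabolic comparison on $[\rho t^\ep,T']$ then yields \eqref{lim-d-epsilon}, the extra factor $\rho>1$ providing the margin needed to initialize the inner expansion from the outcome of the generation step, exactly as in \cite{am}.

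For part~(i), once \eqref{lim-d-epsilon} is known together with the $\mathcal O(\ep)$ localization \eqref{resultat} and the strict monotonicity $U_0'>0$, the level set $\Gamma_t^\ep=\{u^\ep=a\}$ is confined to $\mathcal N_{C\ep}(\gamma_t^\ep)$, and a gradient estimate—obtained by differentiating the inner expansion, so that $\nabla u^\ep\approx \ep^{-1}m'(0;\ep\xi^\ep)\nabla d^\ep\neq 0$ there—shows that $a$ is a regular value; the implicit function theorem then represents $\Gamma_t^\ep$ as a smooth graph over $\gamma_t^\ep$, with $C^{0}$ (indeed $C^{1}$) smallness of the graph function following from \eqref{dist-ep}. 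I expect the main obstacle to be the second step: verifying that the delicate weighted-norm and spectral-gap estimates of \cite{am}, which were designed for a \emph{bounded} forcing $g^\ep$, remain valid when $g^\ep$ and especially $\dot g^\ep=\dot\xi^\ep$ blow up as $\ep\to0$. One has to track the powers of $\ep$ through every term of the inner expansion and check that each residual is $o(1)$ under the constraints $\gamma_1<1/3$, $\gamma_2<2/3$; this is the point where the hypotheses on the noise are genuinely used, and where a purely black-box citation of \cite{am} does not suffice.
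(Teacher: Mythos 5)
Your proposal takes a genuinely different route from the paper, and that route has a gap. The paper (following \cite{am}, whose method you misdescribe: it is not a weighted-norm/spectral-gap analysis but a blow-up argument) proves (ii) by contradiction. It rescales $u^{\ep_k}$ around points $(p_k,t_k)$ with $p_k\in\gamma^{\ep_k}_{t_k}$ in the stretched variables $z$, $\tau$, notes that the forcing in the rescaled equation \eqref{edp-wk} is $\ep_k\xi^{\ep_k}(t_k+\ep_k^2\tau)\to 0$, extracts by parabolic estimates an eternal solution of $w_\tau=\Delta_z w+f(w)$ which is planar-like thanks to the localization \eqref{resultat}, and invokes the Berestycki--Hamel rigidity theorem (Lemma \ref{lem:Ber-Ham}) to get $w=U_0(z^{(n)}-z^*)$; a geometric argument comparing the projection onto $\gamma^{\ep_k}_{t_k}$ with the nearest point of $\Gamma^{\ep_k}_{t_k}$ then contradicts \eqref{eta}.

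The gap in your approach is the following. The sub- and supersolutions \eqref{sub} carry a horizontal shift $\pm\ep p(t)$ with $p(t)\geq K-1$, $K>1$ (indeed $K\gg M_1$), i.e.\ an $\mathcal O(1)$ shift in the stretched variable $z=d^\ep/\ep$. Such a sandwich localizes the layer to width $\mathcal O(\ep)$ in $x$ but leaves an $\mathcal O(1)$ ambiguity in the value of $u^\ep$ at fixed $z$, so comparison with these (or similarly shifted, refined) barriers cannot yield the uniform $o(1)$ estimate \eqref{lim-d-epsilon}. To close this by comparison you would need barriers with $o(1)$ shift in $z$, i.e.\ knowledge of the layer position to precision $o(\ep)$ relative to $\gamma_t^\ep$; for $\ep$-independent initial data this is not available --- the layer position genuinely fluctuates at scale $\mathcal O(\ep)$ depending on $u_0$, which is exactly why the theorem is stated with $\overline{d^\ep}$ (distance to the solution's own level set $\Gamma_t^\ep$) rather than $d^\ep$, and why the proof must be anchored at $\Gamma_t^\ep$ via the blow-up. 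An $\mathcal O(\ep)$ error in the argument of $U_0(\cdot/\ep)$ is an $\mathcal O(1)$ error in the value, so your plan proves a strictly weaker statement. The same issue affects your sketch of (i): the gradient lower bound cannot be obtained by differentiating an inner expansion, since no $C^1$ closeness follows from the $C^0$ sandwich; the paper derives it from the same rescaling argument together with $U_0'>0$. Your observations that $\ep\xi^\ep\to0$ and $\ep\dot\xi^\ep\to0$ under $\gamma_1<1/3$, $\gamma_2<2/3$ are correct and are indeed where the noise hypotheses enter --- but through the rescaled equation and the parabolic estimates, not through a spectral-gap estimate.
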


As mentioned in the introduction and clear from the above theorem,
the deterministic profile of the solutions $u^\ep(x,t)$ {\it inside} the
layers---as explored in \cite{am}---is not altered by the mild
noise.

\medskip

The rest of the paper is organized as follows. In Section
\ref{s:emergence}, we prove the emergence of internal layers for
the problem \eqref{AC}. In Section \ref{s:motion}, we construct
accurate sub- and super-solutions to study the motion of the
layers that then takes place. The combination of these two studies
is performed in Section \ref{s:layers} where we prove Theorem
\ref{th:main} which, using the results of \cite{Fun99},
\cite{DLN}, \cite{weber1}, implies Corollaries \ref{cor:funaki}
and \ref{cor:weber}. Last, we prove Theorem
\ref{th:profile} in Section \ref{s:profile}.


\section{Rapid emergence of $\mathcal O (\ep)$ layers}
\label{s:emergence}

This section deals with the emergence of internal layers (or the
generation of interface) which occurs very quickly. In other
words, given a virtually arbitrary initial data, we prove that the
solution $u^\ep(x,t)$ quickly becomes close to $a_\pm$ in most
part of $\Omega$. In order to track the $\mathcal O(\ep)$
thickness of the layers, we show that the generation occurs in a
$\mathcal O(\ep)$ neighborhood of some smooth hypersurface
$\tildezero$, which itself lies in a $o(\ep ^{1-\gamma})$
neighborhood of the initial interface $\Gamma _0$, where $\gamma$
is chosen such that
$$
\begin{cases} 0<\gamma _1<\gamma<\frac 13 \mbox{ if the noise is of the (MN1) type}\\
0<\frac{\gamma _2}{2}<\gamma<\frac 13\mbox{ if the noise is of the
(MN2) type}.
\end{cases}
$$

 The reason for such an initial drift is the following. For $0\leq t\leq T$, the mean value theorem
 provides a $0<\theta<1$ such that
\begin{equation}\label{crucial}
\xi^\ep(t)=\xi^\ep(0)+\dot \xi^\ep(\theta t)t=\xi^\ep(0)+o(\ep),
\end{equation}
as long as $0\leq t \leq \mathcal O(\ep ^2|\ln \ep|)$, where we
have used \eqref{estimates} under the noise
assumption (MN1), and \eqref{estimates2} under
the noise assumption (MN2). Once the crucial observation
\eqref{crucial} is made, the treatment of the $o(\ep)$ term
follows from the generation of interface property performed in
\cite[Section 4]{ahm}, whereas the $\xi ^\ep(0)=o(\ep ^{-\gamma})$
term explains the initial shift.
\medskip

In order to take advantage of observation \eqref{crucial}, we
define
\begin{equation}\label{def:fep}
f^\ep(u):=f(u)+\ep \xi ^\ep(0).
\end{equation}
In view of assumptions \eqref{cf0} and \eqref{cf1} on $f$, and
since $\ep \xi ^\ep(0)=o (\ep ^{1-\gamma}) \to 0$, we have, for
$\ep >0$ small enough, that $f^\ep$ is still of the bistable type,
in the sense that
\begin{equation}\label{cf0ep}
f^\ep \mbox{ has exactly three zeros} \ \ a_- ^\ep<a^\ep<a_+^\ep,
\end{equation}
where $a_- ^\ep=a_-+o(\ep ^{1-\gamma})$, $a^\ep=a+o(\ep
^{1-\gamma})$, $a_+^\ep=a_++o(\ep ^{1-\gamma})$, and
\begin{equation}\label{cf1ep}
\frac d{du}f^\ep(a_\pm^\ep)\to f'(a_\pm)<0,\ \ \mu_\ep:=\frac
d{du}f^\ep(a^\ep)\to \mu=f'(a)>0.
\end{equation}
We now define
\begin{equation}\label{def:shifted-initial-interface}
\tildezero:=\left\{x\in\Omega:\;u_0(x)=a^\ep\right\},
\end{equation}
which consists in a $o(\ep ^{1-\gamma})$ shift of the initial
interface $\Gamma _0$ defined in \eqref{def:initial-interface}. In
view of assumptions in subsection \ref{ss:assumptions},
$\tildezero$ is a smooth hypersurface without boundary and
properties analogous to \eqref{hyp:gradient} and
\eqref{def:interior} hold true with obvious changes. In
particular, thanks to the compactness of $\Gamma _0$,
\eqref{hyp:gradient} is transferred into
\begin{equation}\label{gradientep}
\nabla u_0(x)\cdot n^\ep(x)\geq \vartheta>0\; \text{ for any }
x\in\tildezero,
\end{equation}
for all $\ep >0$ small enough. We can now state our generation of
interface result.

\begin{theorem}[Emergence of $\mathcal O(\ep)$ layers around
$\tildezero$]\label{th:emergence}
Let the nonlinearity $f$ and the
initial data $u_0$ satisfy the assumptions of subsection
\ref{ss:assumptions}. Let the mild noise be of (MN1) or (MN2)
type. Let $u^\ep(x,t)$ be the solution of \eqref{AC}. Let $\eta
\in (0,\eta _0:= \min (a-a_-,a_+ -a))$ be arbitrary.

Then there exist positive constants $\ep_0$ and $M_0$ such that,
for all $\,\ep \in (0,\ep _0)$,
\begin{enumerate}
\item [$(i)$] for all $x\in\Omega$,
\begin{equation}\label{g-part1}
a_- -\eta \leq u^\ep(x,\mu _\ep ^{-1} \ep ^2 | \ln \ep |) \leq a_+
+\eta,
\end{equation}
\item [$(ii)$] for all $x\in\Omega$ such that $|u_0(x)-a^\ep|\geq
M_0 \ep$, we have that
\begin{align}
&\text{if}\;~~u_0(x)\geq
a^\ep+M_0\ep\;~~\text{then}\;~~u^\ep(x,\mu _\ep ^{-1} \ep ^2 | \ln
\ep |)
\geq a_+ -\eta,\label{g-part2}\\
&\text{if}\;~~u_0(x)\leq a^\ep
-M_0\ep\;~~\text{then}\;~~u^\ep(x,\mu _\ep  ^{-1} \ep ^2 | \ln \ep
|)\leq a_-+\eta \label{g-part3}.
\end{align}
\end{enumerate}
\end{theorem}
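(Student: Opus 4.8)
The plan is to reduce the generation statement for the stochastic equation \eqref{AC} to the deterministic generation-of-interface result of \cite[Section 4]{ahm} applied to the modified nonlinearity $f^\ep$ defined in \eqref{def:fep}. The starting point is the crucial observation \eqref{crucial}: writing $\xi^\ep(t)=\xi^\ep(0)+\dot\xi^\ep(\theta t)\,t$ and using the bounds \eqref{estimates} (under (MN1)) or \eqref{estimates2} (under (MN2)), we see that for $0\le t\le t^\ep=\mu_\ep^{-1}\ep^2|\ln\ep|$ the term $\ep\dot\xi^\ep(\theta t)\,t$ is of order $o(\ep^2)$, hence negligible. Thus on the time interval $[0,t^\ep]$ the solution $u^\ep$ of \eqref{AC} solves, up to an $o(\ep)$ error in the reaction term $\frac{1}{\ep^2}(\,\cdot\,)$,
\begin{equation*}
\partial_t u = \Delta u + \frac{1}{\ep^2}\bigl(f^\ep(u) + o(\ep)\bigr),
\end{equation*}
which is precisely an equation of the form \eqref{AC1} with the uniformly-small perturbation $g^\ep$. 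The point is that $f^\ep$, by \eqref{cf0ep}--\eqref{cf1ep}, is a genuine bistable nonlinearity for $\ep$ small, with zeros $a_-^\ep<a^\ep<a_+^\ep$ shifted by $o(\ep^{1-\gamma})$ and with $\mu_\ep=\frac{d}{du}f^\ep(a^\ep)\to\mu$; and $\tildezero$ in \eqref{def:shifted-initial-interface} is exactly the $a^\ep$-level set of $u_0$, so the non-degeneracy \eqref{gradientep} holds. Therefore all the structural hypotheses needed to invoke the deterministic generation result of \cite{ahm} (with $f$ replaced by $f^\ep$ and $\Gamma_0$ replaced by $\tildezero$) are in place.

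The concrete mechanism I would use is the standard ODE-comparison argument underlying \cite{ahm}: compare $u^\ep$ with spatially-independent super/sub-solutions built from the solution $Y(\tau,\xi)$ of the ordinary differential equation $Y'=f^\ep(Y)$, $Y(0)=\xi$, which at stretched time $\tau=t/\ep^2$ pushes data away from the unstable zero $a^\ep$ toward $a_\pm^\ep$ at an exponential rate governed by $\mu_\ep$. Concretely, one sets $w^\pm(x,t)=Y\!\bigl(t/\ep^2,\, u_0(x)\pm C_1 t\bigr)\pm C_2\ep$-type barriers (adjusted to absorb the $\Delta u_0$ term of size $C_0$ and the $o(\ep)$ perturbation), checks they are super/sub-solutions of \eqref{AC} on $[0,t^\ep]$, and concludes $w^-(x,t^\ep)\le u^\ep(x,t^\ep)\le w^+(x,t^\ep)$. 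Evaluating $Y$ at $\tau=\mu_\ep^{-1}|\ln\ep|$: any datum at distance $\ge M_0\ep$ from $a^\ep$ has been amplified by a factor $\sim e^{\mu_\ep\tau}=\ep^{-1}$, landing in the $\eta$-neighborhood of $a_\pm^\ep$ (hence of $a_\pm$, since $a_\pm^\ep=a_\pm+o(\ep)$), which yields (ii); and the bistable a priori bound on $Y$ together with $u_0\in C^2$ gives the two-sided bound (i) for every $x$. The role of the shift from $\Gamma_0$ to $\tildezero$ is purely bookkeeping: it ensures the barriers are centered at the correct unstable equilibrium $a^\ep$ of $f^\ep$ rather than at $a$.

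The main obstacle is the bookkeeping of the three competing small parameters and making sure the ordering $o(\ep^2)\ll \ep\,\ep^{-\gamma}=o(\ep^{1-\gamma})\ll\ep$ is used correctly in each place. Specifically: (1) one must verify that the $o(\ep)$ error in the reaction term can be absorbed into the barriers \emph{uniformly} over $[0,t^\ep]$ without degrading the $M_0\ep$-threshold in (ii) — this requires the perturbation to be $o(\ep)$, not merely $O(\ep)$, which is exactly what \eqref{crucial} delivers; (2) one must track that $\mu_\ep\to\mu$ enters the definition of $t^\ep$ consistently, so that the amplification factor is $\ep^{-1+o(1)}$ and the constants $\ep_0,M_0$ can be chosen independently of $\ep$; and (3) under (MN2) the bound \eqref{estimates2} carries an extra $|\log\ep|^{1/2}$, so one checks $\ep^{-3\gamma_2/2}|\log\ep|^{1/2}\cdot\ep\cdot t^\ep=o(\ep)$ still holds given $\gamma_2<2/3$. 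None of these is deep, but the proof must thread them carefully; once that is done, the conclusion is a direct transcription of \cite[Section 4]{ahm} with the substitutions $f\rightsquigarrow f^\ep$, $a\rightsquigarrow a^\ep$, $\mu\rightsquigarrow\mu_\ep$, $\Gamma_0\rightsquigarrow\tildezero$.
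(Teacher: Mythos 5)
Your proposal follows essentially the same route as the paper's proof: the mean-value expansion \eqref{crucial} to freeze the noise at $\xi^\ep(0)$ with an $o(\ep)$ remainder, the absorption of $\ep\xi^\ep(0)$ into the shifted bistable nonlinearity $f^\ep$ with unstable zero $a^\ep$ and level set $\tildezero$, and then ODE-based sub/supersolutions $w^\pm$ built from $Y^\ep(\tau,\xi;\delta)$ exactly as in \cite[Section 4]{ahm}. The only cosmetic difference is the precise form of the spatial shift in the barriers (the paper uses the exponentially growing $\pm\ep^2C(e^{\tilde\mu_\ep t/\ep^2}-1)$ from \cite{ahm}, which is what actually absorbs the neglected diffusion term), but since you defer to \cite{ahm} for that construction the argument is the same.
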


\begin{proof} In view of the crucial observation \eqref{crucial} and definition \eqref{def:fep}, the Allen--Cahn equation \eqref{AC} is recast (for small enough
times)
$$
\partial _t u= \Delta u + \frac{1}{\ep^2}\left(f^\ep(u)-\ep
g^\ep(t)\right),\;\;\;0<t\leq \mu _\ep ^{-1}\ep ^2|\ln \ep|,\;\;\;x
\in\Omega,
$$
where the perturbation term
$$
g^\ep(t):=-\xi ^\ep(t)+\xi ^\ep (0),
$$
satisfies
$
\Vert g^\ep\Vert _{L^\infty(0, \mu _\ep ^{-1}\ep ^2|\ln \ep|)}=o(\ep)$, as $\ep \to 0
$. Moreover, using \eqref{estimates} under the noise
assumption (MN1), and \eqref{estimates2} under
the noise assumption (MN2), we get that, in any case,
$$
\Vert \dot g^\ep\Vert _{L^\infty(0, \mu _\ep ^{-1}\ep ^2|\ln \ep|)}=\mathcal O\left(\ep ^{-1} \right), \; \text{ as } \ep \to 0.
$$
After writing the problem in such a form and as far as the perturbation term is concerned,  we are in the footsteps of the Allen--Cahn
equation $(P^{\ep})$ studied in
\cite{ahm}, since the above estimate corresponds to assumption (1.3) in \cite{ahm}.

On the other hand, we need to handle the following minor change: $f$ in \cite{ahm} is replaced by $f^\ep$
in our setting. This difference implies that
 $a$ in \cite{ahm} is replaced by $a^\ep$ and is the reason why the generation occurs
around $\tildezero$ (and not around $\Gamma _0$). Nevertheless,
it is completely transparent that $f^{\ep}$ is still of the
bistable type {\it uniformly with respect to small $\ep >0$}
(this property is used in order to derive certain
estimates).
 More precisely, \eqref{cf0ep} and \eqref{cf1ep} correspond to
assumption (1.1) in \cite{ahm}, uniformly with respect to small
$\ep >0$. Similarly, the non degeneracy assumption
\eqref{gradientep}, when crossing the initial interface
$\tildezero$, is uniform with respect to small $\ep >0$ and
corresponds to assumption (1.10) in \cite{ahm}.

We can then construct the analogous of the sub- and supersolutions
of \cite[Section 4]{ahm}, namely
$$
w^\pm(x,t)=Y^{\ep}\left(\frac{t}{\ep^2}, u_0(x)\pm\ep^2C(e^{\tilde \mu _\ep \frac{t}{\ep ^2}}-1);\pm\ep\right),
$$
where $C>0$ is a large constant, $\tilde \mu _\ep$ is a very small perturbation of $\mu _\ep$, and $Y^{\ep}(\tau,\xi;\delta)$ is the solution of the
Cauchy problem
\begin{equation*}
\left\{\begin{array}{ll} Y_\tau ^{\ep}(\tau,\xi;\delta)=f^{\ep}
(Y^{\ep}(\tau,\xi;\delta))+\delta \quad \text { for }\tau >0\vspace{3pt}\\
Y^{\ep}(0,\xi;\delta)=\xi.
\end{array}\right.
\end{equation*}
Notice that, in this very early stage of emergence of the layers, the above sub- and supersolutions are obtained by considering only the
nonlinear reaction term, that is diffusion is neglected.

For the aforementioned reasons, we can reproduce the lengthy arguments of \cite[Section 4]{ahm} to  prove Theorem \ref{th:emergence},
which is nothing else that the analogous of  \cite[Theorem 3.1]{ahm} taking into account the change $f\leftarrow f^\ep$.
\end{proof}

\section{Propagation of $\mathcal O(\ep)$ layers}\label{s:motion}

In this section, we construct a pair of sub- and supersolutions
whose role is to capture in an $\mathcal O(\ep)$ sandwich the
layers of the solution $u^\ep(x,t)$, while they are propagating. In order to proceed to the aforementioned construction, we
need to define first properly some traveling waves and a signed
distance function used in the definition of this pair.

\subsection{Some traveling waves}\label{ss:preliminaries1}
For $\delta _0>0$ small enough and any $|\delta|\leq \delta _0$, the function $u\mapsto f(u)+\delta$ is still of bistable
type, and we denote by
$$
a_-(\delta)=a_-+\mathcal O(\delta)<a(\delta)=a+\mathcal
O(\delta)<a_+(\delta)=a_++\mathcal O(\delta),
$$
 its three zeros.

 Let
$c(\delta)$, $m(z;\delta)$ be the speed and the profile of the
unique traveling wave associated with the one dimensional problem
$$
\partial _t v=v_{zz}+f(v)+\delta,\;\;t>0,\;z\in \mathbb{R}.
$$
In other words, we have
\begin{equation}\label{n1n}
\begin{split}
&m_{zz}(z;\delta)+c(\delta)m_z(z;\delta)+f(m(z;\delta))+\delta=0,
\quad z\in \mathbb{R},\\
&m(-\infty;\delta)=a_-(\delta),\quad m(0;\delta)=a(\delta),\quad
m(+\infty;\delta)=a_+(\delta).
\end{split}
\end{equation}
Notice in particular that the assumption of balanced nonlinearity
\eqref{equalwell} implies $c(0)=0$. Moreover, the following
estimates are well-known (see in \cite{Che-Hil-Log},
\cite{Fun99} or \cite{weber1}).
\begin{lemma}[Estimates on traveling waves]\label{lem:tw-estimates}
There exist constants $\delta _0>0$, $C>0$, $\lambda>0$ such that,
for all $|\delta|\leq \delta _0$,
\begin{equation}\label{eql1}
\begin{split}
&0<a_+(\delta)-m(z;\delta)\leq Ce^{-\lambda|z|},\;\;\;z\geq 0,\\
&0<m(z;\delta)-a_-(\delta)\leq Ce^{-\lambda|z|},\;\;\;z\leq 0,\\
&0<m_z(z;\delta)\leq Ce^{-\lambda|z|},\;\;\;z\in\mathbb{R},\\
&|m_{zz}(z;\delta)|\leq Ce^{-\lambda|z|},\;\;\;z\in\mathbb{R},\\
&|m_\delta(z;\delta)|\leq C,\;\;\;z\in\mathbb{R},
\end{split}
\end{equation}
and
\begin{equation}\label{def:c0}
\partial _\delta c(0)=-c_0:=-\frac{a_+-a_-}{\displaystyle \int
_{a_-}^{a_+}\sqrt{2F(u)}\,du}<0,\quad F(u):=\int _u
^{a_+}f(z)\,dz.
\end{equation}
\end{lemma}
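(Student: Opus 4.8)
The plan is to treat the traveling wave profile as a heteroclinic orbit of the ODE system and to extract all five estimates in \eqref{eql1} plus the formula \eqref{def:c0} from phase-plane analysis, continuity in the parameter $\delta$, and a standard stable-manifold argument for the exponential decay. First I would recall the well-known existence and uniqueness of the pair $(c(\delta),m(\cdot;\delta))$ for a bistable nonlinearity $u\mapsto f(u)+\delta$: since $f$ is $C^2$ and has exactly three nondegenerate zeros $a_-(\delta)<a(\delta)<a_+(\delta)$ depending smoothly on $\delta$ by the implicit function theorem (using $f'(a_\pm)<0$, $f'(a)>0$, which persist for $|\delta|\le\delta_0$ small), the standard bistable traveling wave theory (see \cite{Che-Hil-Log}) yields a unique (up to translation) monotone front, normalized by $m(0;\delta)=a(\delta)$, together with $C^1$ (indeed $C^2$) dependence of $c(\delta)$ and $m(\cdot;\delta)$ on $\delta$, uniformly on compact $z$-intervals. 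Monotonicity $m_z>0$ is part of this theory; combined with the boundary conditions it gives the sign statements $0<a_+(\delta)-m(z;\delta)$ for $z\ge 0$ and $0<m(z;\delta)-a_-(\delta)$ for $z\le 0$.

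Next I would establish the exponential decay. Writing the first-order system $m_z=p$, $p_z=-c(\delta)p-f(m)-\delta$, the rest points $(a_\pm(\delta),0)$ are hyperbolic saddles because the linearization has eigenvalues with product $f'(a_\pm(\delta))+o(1)<0$; the front is the orbit lying in the unstable manifold of $(a_-(\delta),0)$ and the stable manifold of $(a_+(\delta),0)$. Standard stable-manifold estimates then give $|m(z;\delta)-a_+(\delta)|+|p(z;\delta)|\le Ce^{-\lambda z}$ for $z\ge 0$ and $|m(z;\delta)-a_-(\delta)|+|p(z;\delta)|\le Ce^{-\lambda|z|}$ for $z\le 0$, where $\lambda>0$ can be taken as a uniform lower bound on the spectral gap over $|\delta|\le\delta_0$ (shrinking $\delta_0$ if necessary). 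Since $m_z=p$, this already yields the third line $0<m_z(z;\delta)\le Ce^{-\lambda|z|}$; then $m_{zz}=-c(\delta)m_z-f(m)-\delta$ and, near $\pm\infty$, $|f(m(z;\delta))+\delta|=|f(m)-f(a_\pm(\delta))|\le \|f'\|_\infty|m-a_\pm(\delta)|\le Ce^{-\lambda|z|}$, giving the fourth line. The uniformity of the constants $C,\lambda$ over $|\delta|\le\delta_0$ follows because all the structural quantities (gap, $\|f''\|$ on a neighborhood, $|c(\delta)|$) are controlled uniformly on this compact $\delta$-range.

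For the bound $|m_\delta(z;\delta)|\le C$: differentiating \eqref{n1n} in $\delta$, the function $\phi:=m_\delta$ solves a linear inhomogeneous ODE $\phi_{zz}+c(\delta)\phi_z+f'(m)\phi=-c'(\delta)m_z-1$, with $\phi(0;\delta)=a'(\delta)$ bounded and with right-hand side that is bounded (indeed decaying, since $m_z$ decays). The homogeneous operator $L\phi=\phi_{zz}+c\phi_z+f'(m)\phi$ has $m_z>0$ in its kernel, and its coefficient $f'(m(z;\delta))$ tends to $f'(a_\pm(\delta))<0$ as $z\to\pm\infty$, so $L$ satisfies a maximum principle at $\pm\infty$; a barrier/comparison argument (or variation of parameters using the exponentially separated solutions of the homogeneous equation) then bounds $\phi$ uniformly on $\mathbb{R}$, again uniformly in $|\delta|\le\delta_0$. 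Finally, \eqref{def:c0}: multiply the wave equation in \eqref{n1n} by $m_z$ and integrate over $\mathbb{R}$ to get $c(\delta)\int_{\mathbb R}m_z^2\,dz=-\int_{\mathbb R}\big(f(m)+\delta\big)m_z\,dz=-\int_{a_-(\delta)}^{a_+(\delta)}\big(f(u)+\delta\big)\,du$. At $\delta=0$ the right-hand side vanishes by \eqref{equalwell}, confirming $c(0)=0$; differentiating this identity in $\delta$ at $\delta=0$ and using $c(0)=0$ to kill the term involving the $\delta$-derivative of $\int m_z^2$, one obtains $c'(0)\int_{\mathbb R}m_z(z;0)^2\,dz=-\big(a_+-a_-\big)$, so $c'(0)<0$. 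The standard identification $\int_{\mathbb R}m_z^2\,dz=\int_{a_-}^{a_+}\sqrt{2F(u)}\,du$ — obtained from the first integral $m_z=\sqrt{2F(m)}$ of the $\delta=0$ equation (which is $m_{zz}+f(m)=0$, whence $m_z=\sqrt{2F(m)}$ after using the boundary data and \eqref{equalwell}) — then yields the stated formula for $c_0$. The only mildly delicate point is keeping all constants uniform in $\delta$; this is the main obstacle, and it is handled simply by noting that the compactness of $\{|\delta|\le\delta_0\}$ together with the smooth, nondegenerate dependence of the rest points and of the spectral gaps lets one pass from pointwise (in $\delta$) estimates to uniform ones.
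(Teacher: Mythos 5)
Your proposal is correct. The paper does not prove this lemma at all — it simply records the estimates as ``well-known'' with citations to \cite{Che-Hil-Log}, \cite{Fun99} and \cite{weber1} — and your argument is precisely the standard one underlying those references: hyperbolic saddle/stable-manifold analysis for the exponential decay (uniform in $\delta$ by compactness and nondegeneracy of $f'(a_\pm)$), differentiation of \eqref{n1n} in $\delta$ plus the maximum principle for the linearized operator to bound $m_\delta$, and the energy identity $c(\delta)\int m_z^2 = -\int_{a_-(\delta)}^{a_+(\delta)}(f(u)+\delta)\,du$ together with the first integral $m_z=\sqrt{2F(m)}$ at $\delta=0$ to obtain \eqref{def:c0}. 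The one point worth making explicit in a full write-up is the a priori boundedness of $m_\delta$ at $z=\pm\infty$ (needed before the barrier argument applies on half-lines), which follows from smooth parameter dependence in the stable-manifold theorem; you flag this as the delicate step, and that is indeed where the care is required.
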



\subsection{Signed distance functions}\label{ss:preliminaries2}
We recall that the family of hypersurfaces $(\tildet)$ follows the law \eqref{law-motion} with initial data
$\tildezero$ defined in \eqref{def:shifted-initial-interface}. If the noise is of the (MN1) type
then it follows from \eqref{analogous} and \eqref{def:kappabar} that, up to reducing $\ep _0$ if
necessary,
$$
\mathcal K:=\sup_{0<\ep<\ep_0} \sup _{0\leq t\leq \sigma  ^{\ep}_N} \sup
_{y\in\gamma ^\ep _t} \sup _{1\leq i \leq n-1}\vert \kappa ^\ep
_i(y,t)\vert<\infty,
$$
with $\kappa ^\ep _i(y,t)$ the $i$-th principal curvature of $\gamma
_t^\ep$ at point $y$. On the other hand, if the noise is of the (MN2) type then it follows
from  \eqref{cor-dirr} that, up to reducing $\ep _0$ if
necessary,
$$
\mathcal K:=\sup_{0<\ep<\ep_0} \sup _{0\leq t\leq T} \sup
_{y\in\gamma ^\ep _t} \sup _{1\leq i \leq n-1}\vert \kappa ^\ep
_i(y,t)\vert<\infty.
$$
In the sequel we unify the notations by letting  $\mathcal T=\sigma _N^\ep$,  $\mathcal T=T$
if the noise is of the (MN1) type, (MN2) type respectively.

Let $\tildeint$ denote the region enclosed
by $\tildet$. We then define the associated signed distance
function by
\begin{equation}\label{signdist*}
  \tilde{d}^\ep(x,t):=\begin{cases}
    -{\rm dist}(x,\tildet) & \text{for   }x\in\tildeint, \\
    +{\rm dist}(x,\tildet) & \text{for  }x\in\tildeext.
  \end{cases}
\end{equation}
For $d_0>0$, choose an increasing function $\varphi\in
C^\infty(\mathbb{R})$ satisfying
  $$\varphi(s)=\begin{cases}
  -2d_0 &\;\;\;{\rm if\;\;} s\leq -2d_0,\\
    s & \;\;\;{\rm if\;\;}|s|\leq d_0, \\
    2d_0 & \;\;\;{\rm if\;\;}s\geq 2d_0.
  \end{cases}$$
If $d_0$ is sufficiently small, then, for any $0<\ep<\ep _0$,
$$
d^\ep(x,t):=\varphi(\tilde d ^\ep(x,t))
$$
is smooth in $\Omega\times(0,\mathcal T)$, satisfies $d^\ep(x,t)=0$ for
$x\in\tildet$,
\begin{equation}\label{nablad}
|\nabla d^\ep(x,t)|=1 \quad \text{ in } \{(x,t):\,|d^\ep(x,t)|<
d_0\}.
\end{equation}
Also, since the inward normal velocity $V$ and the mean curvature
$\kappa$ are equal to $\partial _t d^\ep$ and $\frac{\Delta d^\ep}{n-1}$,
equation \eqref{law-motion} is recast as
\begin{equation}\label{recast}
\partial _t d^\ep(y,t)=\Delta d^\ep (y,t)-\frac {c(\ep \xi^\ep
(t))}\ep\quad \text{ on } \{(y,t):\,y\in\tildet \}.
\end{equation}

\subsection{An $\mathcal O (\ep)$-sandwich of the layers}\label{ss:sandwich}

Equipped with the above material, we are now in the position to
construct sub-and supersolutions for equation \eqref{AC} in the
form
\begin{equation}\label{sub}
u_\ep^{\pm}(x,t):=m\left(\frac{d^\ep(x,t)\pm\ep p(t)}\ep;\ep
\xi^\ep(t)\right)\pm q(t),
\end{equation}
where
\begin{equation}\label{def:pq}p(t):=-\EB+e^{Lt}+ K, \quad
q(t):=\sigma \big( \beta \EB+\ep^2 Le^{Lt}\big),
\end{equation}
where $\beta$, $\sigma$, $K$ and $L$ are positive constants to be
chosen. Notice that $q=\sigma\ep^2\,p_t$.  Notice also that,
initially, the vertical shift $p(0)$ is $\mathcal O(1)$ but, as
soon as $t>0$, $p(t)$ becomes $\mathcal O(\ep ^2)$. Furthermore,
it is clear from the definition of $u_\ep^\pm$ that, as soon as
$t>0$, $\lim_{\ep\rightarrow 0} u_\ep^\pm(x,t)= a_-$, respectively
$a_+$, if $x\in\Omega^\ep _t$, respectively $x\in\Omega\setminus
\overline{\Omega^\ep _t}$.

\begin{prop}[Sub- and supersolutions for the propagation]\label{prop:subsuper}
Choose $\beta>0$ and $\sigma>0$ appropriately. Then for any $K>1$,
there exist constants $\ep_0>0$ and $L>0$ such that, for any
$\ep\in(0,\ep _0)$, the functions $(u_\ep^-,u_\ep^+)$ are a pair
of sub- and super-solutions for equation \eqref{AC} in the domain
$\Omega\times (0,\mathcal T)$, that is
$$ \mathcal L u_\ep^+:=
\partial _t u_\ep ^+-\Delta u_\ep^+ -\frac{1}{\ep^2} f(u_\ep^+)-\frac{1}{\ep}
\xi^\ep(t)\geq 0, \quad \mathcal L u_\ep^-\leq 0, $$ in
$\Omega\times (0,\mathcal T)$.
\end{prop}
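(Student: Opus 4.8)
The plan is to verify the differential inequality $\mathcal{L}u_\ep^+ \geq 0$ directly by substituting the ansatz \eqref{sub} into the operator and organizing the resulting terms by orders of $\ep$, following the classical construction of Chen, Hilhorst, Logak and the refinements in \cite{ahm}. First I would introduce the stretched variable $z := (d^\ep(x,t) \pm \ep p(t))/\ep$, so that $u_\ep^+ = m(z; \ep\xi^\ep(t)) + q(t)$, and compute the three constituent pieces of $\mathcal{L}u_\ep^+$: the time derivative $\partial_t u_\ep^+$ produces $m_z \cdot (\partial_t d^\ep/\ep + p_t) + m_\delta \cdot (\ep \dot\xi^\ep) + q_t$; the Laplacian $\Delta u_\ep^+$ produces $m_{zz} |\nabla d^\ep|^2/\ep^2 + m_z \Delta d^\ep/\ep$ (using \eqref{nablad} to replace $|\nabla d^\ep|^2$ by $1$ in the relevant region); and the reaction term contributes $-\ep^{-2}f(m + q) - \ep^{-1}\xi^\ep$. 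The key cancellation is the $\mathcal{O}(\ep^{-2})$ term: by the traveling-wave equation \eqref{n1n} with $\delta = \ep\xi^\ep(t)$, we have $m_{zz} + c(\ep\xi^\ep)m_z + f(m) + \ep\xi^\ep = 0$, so $-m_{zz}/\ep^2 - f(m)/\ep^2 = c(\ep\xi^\ep)m_z/\ep^2 + \xi^\ep/\ep$, and the $\xi^\ep/\ep$ exactly annihilates the noise term from $\mathcal{L}$.

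After this cancellation, I would collect what remains at order $\mathcal{O}(\ep^{-1})$, which is $m_z \cdot \ep^{-1}\left[\partial_t d^\ep - \Delta d^\ep + c(\ep\xi^\ep(t))/\ep + p_t\right]$ --- wait, more carefully, $m_z$ times $\ep^{-1}(\partial_t d^\ep - \Delta d^\ep) + \ep^{-2}c(\ep\xi^\ep) m_z + m_z p_t$; here one uses \eqref{recast} which says $\partial_t d^\ep - \Delta d^\ep = -c(\ep\xi^\ep)/\ep$ \emph{on the interface} $\tildet$, i.e.\ where $z = \pm p(t)$. Away from the interface, the discrepancy $\partial_t d^\ep - \Delta d^\ep + c(\ep\xi^\ep)/\ep$ is controlled by $C|d^\ep|$ using the standard regularity of the signed distance function and the curvature bound $\mathcal{K}$ from subsection \ref{ss:preliminaries2}; this discrepancy is then absorbed by the exponentially decaying factor $m_z(z;\delta) \leq Ce^{-\lambda|z|}$ from Lemma \ref{lem:tw-estimates}, since $|d^\ep| e^{-\lambda|d^\ep|/\ep} \leq C\ep$. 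The remaining positive $m_z p_t$ term, together with the Taylor expansion of the reaction mismatch $f(m+q) - f(m) = f'(m)q + \mathcal{O}(q^2)$ and the stability estimate $f'(m) \leq -\kappa_0 < 0$ near $z = \pm\infty$ versus $f'(m)$ possibly positive in a bounded $z$-region, is where the specific form \eqref{def:pq} of $p$ and $q$ earns its keep: the $e^{Lt}$ and $\EB$ terms are chosen precisely so that $\sigma\ep^2 p_t = q$ and $p_t$ dominate, respectively, the bounded-$z$ instability (via $\beta$ large, exploiting $q_t \sim -\sigma\beta^2\ep^{-2}\EB$ which is very negative) and the exponentially small tails.

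The concrete choice of constants follows the now-standard bootstrap: fix $\beta > 0$ large enough relative to $\max|f'|$ on the relevant compact set and relative to $\lambda$ (this handles the region $|z| \leq$ const where $f'(m)$ may be positive, using that there $\EB$-terms in $q_t$ are strongly negative); then fix $\sigma > 0$ small; then, given any $K > 1$, choose $\ep_0$ small and $L$ large so that the $e^{Lt}$ terms in $p_t$ control all the remaining $\mathcal{O}(1)$ error terms (curvature, the $m_\delta \cdot \ep\dot\xi^\ep$ term which by \eqref{estimates}--\eqref{estimates2} is $\mathcal{O}(\ep^{1-3\gamma})$ hence $o(1)$, and the quadratic-in-$q$ remainder) uniformly on $[0,\mathcal{T}]$. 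Here it is essential that \emph{after the generation phase} we only require the inequality for $t > 0$ where $p(t) = \mathcal{O}(\ep^2)$, keeping the vertical shift $q(t) = \mathcal{O}(\ep^2)$ negligible, while $p(0) = K - 1 + \mathcal{O}(1) > 0$ gives the matching with the generation result of Theorem \ref{th:emergence} at the initial time $t^\ep$. The computation for $u_\ep^-$ is symmetric. I expect the main obstacle to be the bookkeeping of the $\mathcal{O}(1)$ and $\mathcal{O}(\ep)$ error terms in the region $|d^\ep| < d_0$ --- in particular verifying that the noise-dependent terms $m_\delta(z;\ep\xi^\ep)\cdot\ep\dot\xi^\ep$ and the $\delta$-dependence of $c$ and of the zeros $a_\pm(\delta)$ do not spoil the sign, which is exactly the new feature compared to \cite{ahm} where the perturbation was uniformly bounded; this is where the restriction $\gamma_1 < 1/3$ (resp.\ $\gamma_2 < 2/3$) is used, guaranteeing $\ep\dot\xi^\ep = o(1)$ via \eqref{estimates} (resp.\ \eqref{estimates2}).
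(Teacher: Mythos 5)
Your overall route coincides with the paper's: expand $\mathcal L u_\ep^+$ in the stretched variable, use the travelling-wave equation \eqref{n1n} with $\delta=\ep\xi^\ep(t)$ to cancel simultaneously the $\mathcal O(\ep^{-2})$ terms and the noise $\xi^\ep/\ep$, control the discrepancy $\partial_t d^\ep-\Delta d^\ep+c(\ep\xi^\ep)/\ep$ off the interface via the uniform curvature bound $\mathcal K$ and the decay $\vert z\,m_z(z;\delta)\vert\le C$, treat $(1-|\nabla d^\ep|^2)m_{zz}/\ep^2$ as exponentially small where $|d^\ep|\ge d_0$, and note that the term $\ep\dot\xi^\ep\, m_\delta$ is $o(1)$ by \eqref{estimates} or \eqref{estimates2}. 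This is precisely the paper's decomposition $\mathcal L u_\ep^+=E_1+E_2+E_3+E_4$ and its treatment of $E_2$, $E_3$, $E_4$.

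There is, however, one concrete point where your proposed mechanism is reversed and would fail if executed as written: the choice of $\beta$ and the role of the $\EB$ part of $q_t$. Using $q=\sigma\ep^2 p_t$, the favorable term is recast as $E_1=\frac{\beta}{\ep^2}\EB\,(I-\sigma\beta)+Le^{Lt}(I+\ep^2\sigma L)$ with $I=m_z-\sigma f'(m)-\mathcal O(\sigma^2)$. The negative spike $q_t\sim-\sigma\beta^2\ep^{-2}\EB$ is the $-\sigma\beta$ inside $(I-\sigma\beta)$: it is an \emph{obstacle} to $\mathcal L u_\ep^+\ge 0$, not a help. Near $z=\pm\infty$ one has $m_z\approx 0$, so the only positive contribution to $I$ is $-\sigma f'(m)\ge\sigma\rho$ from \eqref{bords}; hence one needs $\sigma\rho-\sigma\beta>0$, i.e. $\beta$ \emph{small} relative to the spectral gap $\rho$ (the paper takes $\beta=\rho/4$, then $\sigma\le\min(\sigma_0,\sigma_1,\sigma_2)$ so that $I\ge 2\sigma\beta$ uniformly, combining \eqref{bords} with $m_z\ge a_1$ in the middle region \eqref{milieu}). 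Taking ``$\beta$ large relative to $\max|f'|$'' makes $I-\sigma\beta<0$ and turns the $\ep^{-2}\EB$ term into a large negative quantity for small $t$. Likewise, the bounded-$z$ region where $f'(m)$ may be positive is handled by $m_z p_t$ (with $m_z\ge a_1$ and $p_t>0$ large), not by the negativity of $q_t$. Once $E_1\ge 2\sigma\beta Le^{Lt}$ is secured in this way, the closing step—$L$ large to dominate the $C_3'(e^{Lt}+K)$ bound on $E_3$ and $\ep_0$ small to validate \eqref{ep0M} and \eqref{ga}—is as you describe.
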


\begin{proof} We only give the proof of the inequality for $u_\ep^+$, since the
one for $u_\ep^-$ follows the same argument. In the sequel, $m$
and its derivatives are evaluated at $$(z^*;\delta
^*):=\left(\frac{d^\ep(x,t)+\ep p(t)}\ep;\ep\xi^\ep(t)\right)
$$
which belongs to $\R \times (-\delta_0,\delta_0)$ if $\ep >0$ is
small enough. Straightforward computations combined with
$$
f(m+q)=f(m)+qf'(m)+\frac 12 q^2 f''(\theta), \quad \text{ for some
} m<\theta=\theta(x,t)<u_\ep^+,
$$
and equation \eqref{n1n} yield $\mathcal L u_\ep
^+=E_1+E_2+E_3+E_4$, with
\begin{eqnarray*}
E_1&=&-\frac 1{\ep^2} q\left(f'(m)+\frac 12 q f''(\theta)\right)+m_zp_t+q_t\\
E_2&=&(1-|\nabla d^\ep|^2)\frac{m_{zz}}{\ep ^2}\\
E_3&=&\left(\partial _t d^\ep(x,t)-\Delta d^\ep (x,t)+\frac {c(\ep
\xi^\ep(t))}\ep\right)\frac{m_z}\ep\\
E_4&=&\ep \dot \xi ^\ep(t)m_\delta.
\end{eqnarray*}

Let us first present some useful inequalities. By assumption
\eqref{cf1}, there are $b>0$, $\rho
>0$
 such that
\begin{equation}\label{bords}
f'(m(z;\delta))\leq -\rho \qquad \hbox{if} \quad m(z;\delta)\in
[a_--b,a_-+b]\cup[a_+-b,a_++b].
\end{equation}
On the other hand, since the region
$\{(z;\delta)\in\R\times(-\delta_0,\delta_0):\,m(z;\delta)\in
[a_-+b,\,a_+-b] \,\}$ is compact, there is  $a_1>0$ such that
\begin{equation}\label{milieu}
m_z(z;\delta) \geq a_1 \qquad\hbox{if} \quad m(z;\delta)\in
[a_-+b,\,a_+-b].
\end{equation}
We now select
\begin{equation}\label{betasigma}
\beta=\frac \rho 4, \quad 0<\sigma\leq \min(\sigma _0,\sigma
_1,\sigma_2),
\end{equation}
 where
$$
\sigma _0:=\frac{a_1}{\rho+\Vert f'\Vert
_{L^\infty(a_--1,a_++1)}},\quad \sigma _1:=\frac 1{2(\beta +1)},
\quad \sigma _2:=\frac{4\beta}{\Vert f''\Vert
_{L^\infty(a_--1,a_++1)}(\beta+1)}.
$$
Combining \eqref{bords}, \eqref{milieu} and  $0<\sigma\leq \sigma
_0$, we obtain
\begin{equation}\label{U0-f}
m_z(z;\delta)-\sigma f'(m(z;\delta))\geq \sigma \rho, \quad
\forall(z;\delta)\in\R \times (-\delta _0,\delta _0).
\end{equation}

Now let $K>1$ be arbitrary. In what follows we will show that
$\mathcal L u _\ep ^+ \geq 0$ provided that the constants $\ep_0$
and $L$ are appropriately chosen. We go on under the following
assumption (to be checked at the end)
\begin{equation}\label{ep0M}
\ep _0^2 Le^{L\mathcal T} \leq 1.
\end{equation}
Then, given any $\ep\in(0,\ep_0)$, we have, since $\sigma \leq
\sigma _1$, $0\leq q(t)\leq \frac 1 2$, that
\begin{equation}\label{uep-pm}
a_--1\leq u_\ep ^\pm(x,t) \leq a_++1.
\end{equation}

Using the expressions for $p$ and $q$, the \lq\lq favorable'' term $E_1$ is
recast as
$$
E_1=\frac{\beta}{\ep^2}\,\EB(I-\sigma\beta)+Le^{Lt}(I+\ep^2\sigma
L),
$$
where
$$
I=m_z(z^*;\delta^*)-\sigma f '(m(z^*;\delta^*))-\frac {\sigma^2}2
f ''(\theta)(\beta\EB+\ep^2 Le^{Lt}).
$$
In virtue of \eqref{U0-f}, \eqref{uep-pm} and \eqref{ep0M}, we
have $I\geq \sigma \rho-\frac {\sigma^2}{2} \Vert f''\Vert
_{L^\infty(a_--1,a_++1)}(\beta+1)$. Since $0<\sigma \leq \sigma
_2$, we obtain $ I \geq 2\sigma\beta$. Consequently, we have
$$
E_1\geq \frac{\sigma\beta^2}{\ep^2}\EB + 2\sigma\beta L e^{Lt}\geq
2\sigma \beta L e^{Lt}.
$$

Next, in view of \eqref{nablad}, $E_2=0$ in the region
$|d^\ep(x,t)|\leq d_0$. Next we consider the region where
$|d^\ep(x,t)|\geq d_0$. We deduce from Lemma
\ref{lem:tw-estimates} that
$$
|E_2|\leq \frac C{\ep^2}e^{-\lambda|d^\ep(x,t)+\ep p(t)|/ \ep}\leq
\frac{C}{\ep^2}e^{-\lambda(d_0 / \ep-p(t))}.
$$
We remark that $0<K-1 \leq p \leq e^{L\mathcal T} +K$.
Consequently, if we assume (to be checked at the end)
\begin{equation}\label{ga}
e^{L\mathcal T}+K \leq \frac{d_0}{2\ep_0},
\end{equation}
then $\displaystyle{\frac{d_0}{\ep}}-p(t)\geq
\displaystyle{\frac{d_0}{2\ep}}$, so that $ |E_2|\leq
\frac{C}{\ep^2}e^{-\lambda d_0 / (2\ep)}=\mathcal O(1)$, as $\ep
\to 0$.

Let us now turn to the term $E_3$. In the region where $\vert
d^\ep(x,t)\vert \geq\min(d_0,\frac 1{2 \mathcal K})>0$ (away from
the interface), argument similar as those for $E_2$ yield $\vert
E_3\vert =\mathcal O(1)$ as $\ep \to 0$ (thanks to the exponential
decay of the wave). In the region where $\vert d^\ep(x,t)\vert
\leq \min(d_0, \frac 1{2 \mathcal K})$, let us pick a $y\in\gamma
_t ^{\ep}$ such that $\vert d^{\ep}(x,t)\vert =dist(x,y)$. In view
of \eqref{recast} and $\partial _t d^\ep(x,t)=\partial _t
d^\ep(y,t)$ we get
$$
E_3=\left(\Delta d^\ep(y,t)-\Delta
d^\ep(x,t)\right)\frac{m_z}{\ep}.
$$
But it follows from \cite[Lemma 14.17] {Gil-Tru} that
\begin{eqnarray*}
\vert \Delta d^\ep(y,t)-\Delta d^\ep(x,t) \vert &=&\left\vert \sum _{i=1}^{n-1}\kappa ^\ep _i(y,t)-\sum _{i=1}^{n-1}\frac{\kappa ^\ep _i(y,t)}{1-d^{\ep}(x,t)\kappa ^\ep _i(y,t)}\right\vert\\
&\leq & \vert d^{\ep}(x,t)\vert \sum _{i=1}^{n-1}\frac{(\kappa ^\ep _i )^{2}(y,t)}{\vert 1-d^{\ep}(x,t)\kappa ^\ep  _i (y,t)\vert}\\
&\leq & 2\vert d^{\ep}(x,t)\vert \sum _{i=1}^{n-1}(\kappa ^\ep _i)
^{2}(y,t)
\end{eqnarray*}
since $\vert d^{\ep}(x,t)\vert \leq \frac{1}{2\mathcal K}$, and
$\vert \kappa ^\ep _i(y,t)\vert \leq \mathcal K$. As a result we have
$\vert E_3\vert \leq 2(n-1)\mathcal K ^{2}\vert
d^{\ep}(x,t)\vert=:C\vert d^{\ep}(x,t)\vert$, so that
\begin{eqnarray*}
\vert E_3\vert &\leq& C\frac{\vert d^\ep(x,t)\vert}{\ep}m_z\left(\frac{d^\ep(x,t)+\ep p(t)}{\ep};\ep\xi ^\ep(t)\right)\\
&\leq & C \sup_{z\in\R, \vert \delta\vert \leq \delta _0}\vert zm_z(z;\delta)\vert+C\ep\vert p(t)\vert \sup _{z\in\R, \vert \delta\vert \leq \delta _0}\vert m_z(z;\delta)\vert\\
& \leq &C_3+C_3'(e^{Lt}+K),
\end{eqnarray*}
for some constants $C_3>0$, $C_3'>0$ and where we have used Lemma
\ref{lem:tw-estimates}.

Last, it follows from \eqref{estimates}, \eqref{estimates2} and
Lemma \ref{lem:tw-estimates} that $|E_4|\to 0$, as $\ep \to 0$,
uniformly in $\Omega \times (0,\mathcal T)$.

Putting the above estimates all together, we arrive at $$ \mathcal
L u_\ep ^+\geq (2\sigma \beta L-C_3')e^{Lt} -\mathcal O(1)
$$ which is
nonnegative, if $L>0$ is sufficiently large, and $\ep _0>0$
sufficiently small to validate assumptions \eqref{ep0M} and
\eqref{ga}. The theorem is proved.\end{proof}

\section{Description of the $\mathcal O(\ep)$ layers and their convergence}\label{s:layers}

\subsection{Proof of Theorem \ref{th:main}}\label{ss:proof-main} Let $\eta\in(0,\eta _0)$ be given.
Let us select $\beta >0$ and $\sigma >0$ that satisfy
\eqref{betasigma}---so that Proposition \ref{prop:subsuper} is
available---and $\beta \sigma \leq \eta /3$. By the emergence of
the layers property, we are equipped with small $\ep _0>0$ and a
$M_0>0$ such that \eqref{g-part1}, \eqref{g-part2},
\eqref{g-part3} hold with $\beta\sigma /2$ playing the role of
$\eta$. On the other hand, in view of \eqref{gradientep}, there
is  $M_1>0$ such that we have the following correspondence
\begin{equation}\label{corres}
\begin{array}{ll}\text { if } \quad d^{\ep} (x,0) \geq \ M_1 \ep
&\text { then } \quad u_0(x) \geq a^{\ep} +M_0 \ep\vspace{3pt}\\
\text { if } \quad d^{\ep} (x,0) \leq -M_1 \ep & \text { then }
\quad u_0(x) \leq a^{\ep} -M_0 \ep,
\end{array}
\end{equation}
where we recall that  $d^{\ep}(x,0)$ denotes the signed distance
function associated with the hypersurface $\gamma _0
^{\ep}:=\{x:\, u_0(x)=a^{\ep}\}$. Now we define functions $H^+(x),
H^-(x)$ by
\[
\begin{array}{l}
H^+(x)=\left\{
\begin{array}{ll}
a_++\sigma\beta/2\quad\ &\hbox{if}\ \ d^\ep(x,0)\geq -M_1\ep\\
a_-+\sigma\beta/2\quad\ &\hbox{if}\ \ d^\ep(x,0)<  -M_1\ep,
\end{array}\right.\\
H^-(x)=\left\{
\begin{array}{ll}
a_+-\sigma\beta/2\quad\ &\hbox{if}\ \ d^\ep(x,0)\geq \;M_1\ep\\
a_--\sigma\beta/2\quad\ &\hbox{if}\ \ d^\ep(x,0)<  \;M_1\ep.
\end{array}\right.
\end{array}
\]
Then from the above observations we see that, after a very short
time $\mathcal O(\ep ^2\vert \ln\ep\vert)$, we have an $\mathcal
O(\ep)$ sandwich of the layers, namely
\begin{equation}\label{H-u}
H^-(x) \,\leq\, u^\ep(x,\mu_\ep ^{-1} \ep^2|\ln \ep|) \,\leq\,
H^+(x)\qquad \hbox{for}\ \ x\in\Omega.
\end{equation}

We now would like to use the sub and supersolutions \eqref{sub}
for the propagation described at Section \ref{s:motion}.
Observe that
$$
u_\ep ^\pm(x,0)=m\left(\frac{d^\ep(x,0)\pm
K}{\ep};\ep\xi^\ep(0)\right)\pm\sigma (\beta+\ep^2L),
$$
so that it follows from $\ep \xi ^\ep(0)=\mathcal O(\ep
^{1-\gamma})\to 0$ and Lemma \ref{lem:tw-estimates} on traveling
waves $m(z;\delta)$ that we can select $K>>M_1$ so that
\begin{equation*}
u_\ep^{-}(x,0)\leq H^-(x) \,\leq\, u^\ep(x,\mu_\ep ^{-1} \ep^2|\ln
\ep|) \,\leq\, H^+(x)\leq u_\ep ^{+}(x,0)\qquad \hbox{for}\ \
x\in\Omega.
\end{equation*}
Let us now choose $\ep _0>0$ and $L>0$ so that Proposition
\ref{prop:subsuper} applies. It therefore follows from the
comparison principle that
\begin{equation}\label{ok}
u_\ep^-(x,t) \leq u^\ep (x,t+t^\ep) \leq u_\ep^+(x,t) \quad \text
{ for } x\in\Omega, \,0 \leq t \leq \mathcal T-t^\ep,
\end{equation}
where $t^\ep=\mu _\ep ^{-1} \ep ^2|\ln \ep|$.

To conclude, in view of $\ep\xi^\ep(t)=\mathcal
O(\ep^{1-\gamma})\to 0$ and Lemma \ref{lem:tw-estimates} on
traveling waves, we can select $\ep _0>0$ small enough and $C>0$
large enough so that, for all $\ep\in(0,\ep _0)$, all $0\leq t\leq
\mathcal T-t^\ep$,
\begin{equation}\label{C}
m(C-e^{L\mathcal T}-K;\ep\xi^\ep(t)) \geq a_+-\frac \eta 2 \quad \text {
and } \quad m(-C+e^{L\mathcal T}+K;\ep\xi ^\ep(t)) \leq a_-+\frac \eta 2.
\end{equation}
Using inequalities \eqref{ok}, expressions \eqref{sub} for $u_\ep
^\pm$, estimates \eqref{C} and $\sigma\beta\leq \eta /3$ we  then
see that, for all $\ep\in(0,\ep _0)$ and all $0\leq t\leq
\mathcal T-t_\ep$, we have
\begin{equation}\label{correspon}
\begin{array}{ll}\text { if } \quad d^\ep(x,t) \geq \ C \ep &
\text { then } \quad
u^\ep (x,t+t^\ep) \geq a_+ -\eta\vspace{3pt}\\
\text { if } \quad d^\ep(x,t) \leq -C \ep & \text { then } \quad
u^\ep (x,t+t^\ep) \leq a_- +\eta,
\end{array}
\end{equation}
and $ u^\ep (x,t+t^\ep) \in [a_--\eta,a_++\eta], $ which completes
the proof of Theorem \ref{th:main}. \qed

\subsection{Proof of Corollary \ref{cor:funaki}}\label{ss:stochastic-fun} Let us observe that the simplifying
assumption $\xi ^\ep(0)=0$ enables to get rid of the initial small drift which happens during the emergence
of the layers. Precisely, in view of \eqref{crucial} and \eqref{def:shifted-initial-interface}, $\gamma _0 ^\ep$
is nothing else than $\Gamma _0$. As a result, the approximated (deterministic) $\gamma ^\ep _t$ involve perturbations
of the speed but not of the initial data. This enables (see the end of subsection \ref{ss:mmc-fun}) to reproduce
the arguments of \cite{Fun99} to derive Corollary \ref{cor:funaki} from our Theorem \ref{th:main}.

Notice that, if $\xi ^\ep  (0)\neq 0$, then one needs to derive
an analogous of \eqref{cor-dirr} (available in the Weber's
context) in the Funaki's context. We think that, following
\cite{Fun99}, this can be performed but this is beyond the scope
of the present paper so we decided to avoid this situation.

\subsection{Proof of Corollary \ref{cor:weber}}\label{ss:stochastic-web} Combining Theorem \ref{th:main}
and estimate \eqref{cor-dirr}, we  get Corollary \ref{cor:weber} by reproducing the arguments
of  \cite[Proof of Theorem 1.1] {weber1}.

\section{Profile in the layers}\label{s:profile}

Equipped with Theorem \ref{th:main}, we can now prove the validity
of the first term of the asymptotic expansions {\it inside} the layers,
namely Theorem \ref{th:profile}. The proof consists in using the
stretched variables, a blow-up argument and the result of
\cite{Ber-Ham}, as performed in the deterministic case \cite{am}.

Before going further, we recall that a solution of an evolution
equation is called {\it eternal} (or an {\it entire} solution) if
it is defined for all positive and negative time. We follow this
terminology to refer to a solution $w(z,\tau)$ of
\begin{equation}\label{eq-eternal}
w_{\tau} = \Delta _z w + f(w)\,, \quad z\in\R^n,\,\tau \in\R\,.
\end{equation}
Stationary solutions and travelling waves are examples of eternal
solutions. We quote below a result of Berestycki and Hamel
\cite{Ber-Ham} asserting that \lq\lq any planar-like eternal
solution is actually a planar wave". More precisely, the following
holds (for $z\in \R ^n$ we write $z=(z^{(1)},\cdots,z^{(n)})$).

\begin{lemma}[{\cite[Theorem 3.1]{Ber-Ham}}]\label{lem:Ber-Ham}
Let $w(z,\tau)$ be an eternal bounded solution of
\eqref{eq-eternal} satisfying
\begin{equation}\label{conditions}
\liminf_{z^{(n)}\to \infty}\; \inf_{z'\in\R^{n-1},\,\tau\in \R}
w(z,\tau)
> a\,, \quad \limsup_{z^{(n)}\to-\infty}\; \sup_{z'\in\R^{n-1},\,\tau\in\R}
w(z,\tau) < a\,,
\end{equation}
where $z':=(z^{(1)},\cdots,z^{(n-1)})$. Then there exists  a
constant $z^*\in\R$ such that
\[
w(z,\tau) = U_0(z^{(n)}-z^*)\,,  \quad z\in\R^n\,,\,\tau\in\R\,.
\]
\end{lemma}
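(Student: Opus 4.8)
The plan is to show that, up to a translation, $w$ coincides with the one‑dimensional standing wave $U_0$, by establishing in order: (a) $w(z',z^{(n)},\tau)\to a_\pm$ \emph{uniformly} in $(z',\tau)$ as $z^{(n)}\to\pm\infty$; (b) $\partial_{z^{(n)}}w>0$; (c) $w$ is independent of $z'$ and of $\tau$; and (d) the resulting stationary one‑dimensional profile is $U_0$ up to a shift. The tools are interior parabolic regularity estimates (to pass to the limit along space‑time translates of $w$), the comparison principle for bounded solutions, and the sliding method; this is the parabolic counterpart of Gibbons‑type rigidity, and the delicate point will be a maximum principle in the unbounded space‑time domain $\R^n\times\R$, for which the \emph{uniform‑in‑$(z',\tau)$} hypotheses \eqref{conditions} together with the stability $f'(a_\pm)<0$ are exactly what is needed.

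For (a) I would first note that evaluating \eqref{eq-eternal} at an interior space‑time extremum of a translate‑limit of $w$ forces $f(\sup w)\ge0$ and $f(\inf w)\le0$, hence $a_-\le w\le a_+$. Then, arguing by contradiction, if $w\not\to a_+$ uniformly as $z^{(n)}\to+\infty$ there are $\delta>0$ and points $(z'_k,h_k,\tau_k)$ with $h_k\to+\infty$ and $w(z'_k,h_k,\tau_k)\le a_+-\delta$. The translates $w_k(z,\tau):=w(z'+z'_k,z^{(n)}+h_k,\tau+\tau_k)$ converge (along a subsequence, in $C^{2,1}_{\mathrm{loc}}$) to an eternal solution $w_\infty$ of \eqref{eq-eternal} satisfying $w_\infty\ge a+\theta$ (from \eqref{conditions}, with some $\theta>0$) and $w_\infty(0,0)\le a_+-\delta$. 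Comparing $w_\infty$ with the spatially constant solution $\phi_s$ of $\dot\phi=f(\phi)$, $\phi(s)=a+\theta$ — which increases to $a_+$ since $f>0$ on $(a,a_+)$ — on $\R^n\times[s,\tau]$ and letting $s\to-\infty$ forces $w_\infty\equiv a_+$, a contradiction. The limit $a_-$ at $-\infty$ is symmetric.

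For (b)–(c) I would run the sliding method in the $e_n$‑direction. Fix $\delta_1>0$ small with $f'\le-\mu_1<0$ on $[a_--\delta_1,a_-+2\delta_1]\cup[a_+-2\delta_1,a_++\delta_1]$ and, using (a), $A>0$ with $w\le a_-+\delta_1$ on $\{z^{(n)}\le-A\}$ and $w\ge a_+-\delta_1$ on $\{z^{(n)}\ge A\}$. For $\lambda\ge0$ let $P(\lambda)$ be the assertion: $w(z+\mu e_n,\tau)\ge w(z,\tau)$ for all $z,\tau$ and all $\mu\ge\lambda$. For $\lambda$ large, $P(\lambda)$ holds: where $z^{(n)}+\mu\ge A$, the difference $w(\cdot+\mu e_n,\cdot)-w$ is a supersolution of a linear parabolic equation with zeroth‑order coefficient $\le-\mu_1$, so it cannot be negative there; a symmetric argument near $-\infty$ together with a decay barrier disposes of the transition zone $\{|z^{(n)}|\le A\}$. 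Setting $\lambda^*:=\inf\{\lambda\ge0:P(\lambda)\}$, if $\lambda^*>0$ then $v:=w(\cdot+\lambda^*e_n,\cdot)-w\ge0$ solves a linear parabolic equation; the strong maximum principle gives either $v\equiv0$ — impossible, since then $w$ would be $\lambda^*e_n$‑periodic yet have distinct limits $a_\pm$ at $\pm\infty$ — or $v>0$, in which case the negative zeroth‑order coefficient where $|z^{(n)}|$ is large plus a barrier yields $\inf v>0$, so $\lambda^*$ could be lowered, contradicting its minimality. Hence $\lambda^*=0$, $w$ is nondecreasing in $z^{(n)}$, and applying the strong maximum principle to $\partial_{z^{(n)}}w$ (a solution of the linearized equation) gives $\partial_{z^{(n)}}w>0$. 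The same scheme, now comparing $w(\cdot+te_i+\mu e_n,\cdot)$ with $w$ for fixed $t\in\R$ and $i\le n-1$, and $w(\cdot+\mu e_n,\cdot+s)$ with $w$ for fixed $s\in\R$, gives $w(z+te_i,\tau)\ge w(z,\tau)$ and $w(z,\tau+s)\ge w(z,\tau)$; replacing $t$ by $-t$ and $s$ by $-s$ upgrades these to equalities, so $w=w(z^{(n)})$ and $w$ is independent of $\tau$.

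Finally, for (d): \eqref{eq-eternal} reduces to $w''+f(w)=0$ with $w(\mp\infty)=a_\mp$ and $w'>0$; multiplying by $w'$ and integrating gives $\tfrac12(w')^2=F(w)$ with $F$ as in \eqref{def:c0} (the two endpoint values of the first integral agree precisely because of the balance condition \eqref{equalwell}), so $w$ is the unique increasing heteroclinic connecting $a_-$ to $a_+$, which is $U_0$ up to translation; taking $z^*$ with $w(z^*)=a$ yields $w(z,\tau)=U_0(z^{(n)}-z^*)$. The hard part will be steps (b)–(c): turning the sliding argument into a rigorous one requires a maximum principle and barrier estimates valid on the unbounded domain $\R^n\times\R$, and in particular uniform control in the transition strip $\{|z^{(n)}|\le A\}$ — which is exactly what the uniform hypotheses \eqref{conditions}, upgraded to uniform convergence in step (a), are there to provide.
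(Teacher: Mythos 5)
The paper offers no proof of this lemma: it is quoted directly as \cite[Theorem 3.1]{Ber-Ham}, so there is no internal argument to compare against, and your proposal amounts to a self-contained proof of the cited result. Your architecture --- (a) upgrading \eqref{conditions} to uniform limits $a_\pm$ by comparing with the spatially homogeneous ODE flow and exploiting eternality (start the comparison at time $s$ and let $s\to-\infty$), (b)--(c) sliding in the $e_n$-direction, then sliding combined with shifts in $z'$ and in $\tau$ to get monotonicity, one-dimensionality and time-independence, and (d) the first-integral classification of the resulting profile, where \eqref{equalwell} is exactly what makes $\tfrac12(w')^2=F(w)$ close up --- is the standard route to such parabolic Liouville-type rigidity and is in the same circle of ideas as the cited proof. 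Steps (a) and (d) are correct as written, and your identification of the genuinely delicate point (maximum principles on $\R^n\times\R$ with no initial time, handled by eternality plus translation-compactness) is the right one.

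Two sub-steps of (b)--(c) are not correct as stated, though both are repairable with tools you already use. First, the zeroth-order coefficient $\int_0^1 f'\bigl(s\,w(z+\mu e_n,\tau)+(1-s)\,w(z,\tau)\bigr)\,ds$ is \emph{not} $\le-\mu_1$ on all of $\{z^{(n)}+\mu\ge A\}$: there $w(z+\mu e_n,\tau)$ is near $a_+$ but $w(z,\tau)$ may be near $a_-$ or in the transition range, so the segment crosses the zone where $f'>0$. What is true, and what makes the argument run, is that the coefficient is $\le-\mu_1$ at every point where $v:=w(\cdot+\mu e_n,\cdot)-w<0$, since $v<0$ forces both values into the same $\delta_1$-neighbourhood of a stable zero (using $a_-\le w\le a_+$ from (a)); moreover for $\mu\ge 2A$ the sets $\{z^{(n)}+\mu\ge A\}$ and $\{z^{(n)}\le-A\}$ already cover $\R^n$, so no separate barrier for the transition strip is needed at this stage. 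Nonnegativity of $v$ then follows from the eternal comparison $v\ge-\Vert v\Vert_\infty e^{-\mu_1(\tau-s)}$, $s\to-\infty$, made rigorous at a non-attained infimum by translating along a minimizing sequence and passing to the limit. Second, to lower $\lambda^*$ you need $\inf v_{\lambda^*}>0$ over the slab $\{|z^{(n)}|\le A\}$, which is unbounded in $(z',\tau)$; pointwise positivity from the strong maximum principle ``plus a barrier'' does not yield this. The correct argument is again translation-compactness: if the infimum over the slab were $0$, shifting along a minimizing sequence (shifts only in $z'$ and $\tau$, so \eqref{conditions} is preserved) produces an eternal limit pair with $v\ge0$ vanishing at an interior point, hence $v\equiv 0$ backward in time by the strong maximum principle, i.e.\ periodicity with period $\lambda^* e_n$, contradicting the limits at $z^{(n)}=\pm\infty$ exactly as in your own periodicity argument. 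With these two repairs, and with every comparison in (b)--(c) run from time $s$ and $s\to-\infty$ as you did in (a), the proof is complete.
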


\subsection{Proof of (ii) in Theorem \ref{th:profile}}
\label{SS:proof1} Let $\rho>1$ and $0<T'<T$ be given. Assume by
contradiction that \eqref{lim-d-epsilon} does not hold. Then there
is $\eta >0$ and sequences $\ep _k \downarrow 0$, $t_k\in[\rho
t^{\ep _k},T']$, $x_k \in \bar \Omega$ ($k=1,2,...$) such that
\begin{equation}\label{eta}
  \left |u^{\ep _k}(x_k,t_k)-U_0\left (\frac{\overline{d^{\ep_k}}(x_k,t_k)}
  {\ep_k}\right)\right|\geq 2\eta\,.
\end{equation}
In view of \eqref{resultat}, \eqref{dist-ep} and $U_0(\pm
\infty)=a_\pm$, for \eqref{eta} to hold it is necessary to have
\begin{equation}\label{near}
  d^\ep(x_k,t_k)=\mathcal O (\ep _k)\,,\quad \text{ as }\, k\to
  \infty\,.
\end{equation}
Recall that $d^{\ep}(\cdot,t)$ denotes the signed distance function to $\gamma _t  ^{\ep}$ as
defined in subsection \ref{ss:preliminaries2}, whereas $\overline{d^{\ep}}(\cdot,t)$
denotes that to $\Gamma _t ^{\ep}$ defined in \eqref{eq:dist-e}.

If $u^{\ep _k}(x_k,t_k)=a$, then this would mean that $x_k\in
\Gamma^{\ep_k}_{t_k}$, in which case the left-hand side of
\eqref{eta} would be $0$ (since $U_0(0)=a$), which is impossible.
Hence $u^{\ep _k}(x_k,t_k)\ne a$. By extracting a subsequence if
necessary, we may assume without loss of generality that $u^{\ep
_k}(x_k,t_k)- a$ has a constant sign for $k=0,1,2,\ldots$, say
\begin{equation}\label{u<0}
  u^{\ep _k}(x_k,t_k)> a \quad (k=0,1,2,\ldots),
\end{equation}
which then implies that $\overline{d^{\ep_k}}(x_k,t_k)> 0$
($k=0,1,2,\ldots$).  Since  the mean curvature of
$\gamma _t^\ep$ is uniformly bounded for $0\leq t \leq T'$,
$0<\ep<<1$, there is a small $\delta
>0$ such that each $x$ in a $\delta$-tubular neighborhood of
$\gamma _t^\ep$ has a unique orthogonal projection on $\gamma
_t^\ep$. Since the sequence $(x_k)$ remains very close to
$\gamma_{t_k}^{\ep _k}$ by \eqref{near}, each $x_k$ (with
sufficiently large $k$) has a unique orthogonal projection
$p_k=p^{\ep_k}(x_k,t_k) \in \gamma _{t_k}^{\ep _k}$. Let $y_k$ be
a point on $\Gamma^{\ep_k}_{t_k}$ that has the smallest distance
from $x_k$. If such a point is not unique, we choose one such
point arbitrarily. Then we have
\begin{equation}\label{u=0}
  u^{\ep_k}(y_k,t_k)=a \quad (k=0,1,2,\ldots),
\end{equation}
\begin{equation}\label{d=x-y}
  \overline{d^{\ep_k}}(x_k,t_k)=\Vert x_k - y_k\Vert\,,
\end{equation}
\begin{equation}\label{circle}
  u^{\ep_k}(x,t_k)>a \quad\  \hbox{if}\ \ \Vert x-x_k\Vert
  < \Vert y_k - x_k\Vert\,.
\end{equation}
\[
x_k-p_k\perp \gamma_{t_k}^{\ep _k} \quad\hbox{at}\ \
p_k\in\gamma_{t_k}^{\ep _k}\,,
\]
 Furthermore, \eqref{near} and
\eqref{dist-ep} imply
\begin{equation}\label{near2}
  \Vert x_k-p_k\Vert = {\mathcal O}(\ep_k)\,, \ \
  \Vert y_k-p_k\Vert = {\mathcal O}(\ep_k) \quad
  (k=0,1,2,\ldots).
\end{equation}

We now rescale the solution $u^\ep$ around $(p_k,t_k)$ and define
\begin{equation}\label{rescaling}
  w^k(z,\tau):=u^{\ep _k}(p_k+\ep _k \mathcal R _k z,t_k+\ep _k ^2
  \tau)\,,
\end{equation}
where $\mathcal R _k$ is a matrix in $SO(n,\R)$ that rotates the
$z^{(n)}$ axis onto the normal at $p_k\in \gamma _{t_k}^{\ep _k}$,
that is,
\[
\mathcal  R_k:(0,\dots,0,1)^T\mapsto n^{\ep _k}(p_k,t_k)\,,
\]
where $(\ )^T$ denotes a transposed vector and $n^\ep(p,t)$ the
outward normal unit vector at $p\in \gamma_{t}^\ep$. Since
$\gamma_t^\ep$ (hence the points $p_k$) is uniformly separated
from $\partial\Omega$ by some positive distance, there exists
$c>0$ such that $w^k$ is defined (at least) on the box
\[
 B^k:=\left \{(z,\tau)\in \R^n\times \R \,:\, \Vert z\Vert \leq
 \frac c{\ep _k}, \ \  -(\rho -1)\mu _{\ep_k}^{-1}|\ln \ep
_k|\leq \tau \leq \frac{T-T'}{\ep _k ^2}\right\}\,,
\]
where we recall that $\mu_\ep \to \mu=f'(a)>0$ as $\ep \to 0$.
Since $u^\ep$ satisfies the equation in \eqref{AC}, we see that
$w^k$ satisfies
\begin{equation}\label{edp-wk}
  w^k _\tau=\Delta _z w^k+f(w^k)+\ep_k \xi ^{\ep _k}(t_k+\ep _k ^2 \tau)\quad \text{ in } B^k\,.
\end{equation}
Moreover, if $(z,\tau)\in B^k$ then $t^{\ep_k}\leq t_k+\ep _k ^2
\tau \leq T$. Therefore \eqref{resultat} implies
\begin{equation}\label{front-like0}
  \left\{ \begin{array}{lll}
  d^{\ep _k}(p_k+\ep _k \mathcal R _k z,t_k+\ep _k^2\tau) \leq -C\ep _k
   \ \ & \Rightarrow
  \ & w^k(z,\tau) \leq a _-+\eta\,,\vspace{4pt}\\
  d^{\ep _k}(p_k+\ep _k \mathcal R _k z,t_k+\ep _k^2\tau) \geq C\ep _k
   \ \ & \Rightarrow
  \ & w^k(z,\tau) \geq a_+-\eta\,,
  \end{array}\right.
\end{equation}
as long as $(z,\tau) \in B^k$. Now we recall that the
rotation by $\mathcal R _k$ of the $z^{(n)}$ axis is normal to
$\gamma_{t_k}^{\ep _k}$ at $p_k$, and that the mean curvature of
$\gamma_t ^\ep$ is uniformly bounded for $0\leq t\leq T'$,
$0<\ep<<1$. Also the normal speed of $\gamma _t ^\ep$, given by
$V=(n-1)\kappa -\frac{c(\ep \xi _t^\ep)}{\ep}$,  is $\mathcal
O(\ep^{-\gamma '})$ for some $0<\gamma '<\frac 13$ in view of
$c(\delta)=-c_0 \delta +\mathcal O(\delta ^{2})$ as $\delta \to
0$, and \eqref{estimates} (if (MN1) noise) or Proposition
\ref{prop:Weber-noise} (if (MN2) noise). As  a result
$d^{\ep}(x,t)$ satisfies
$$
\vert d^{\ep}(x,t)-d^{\ep}(x,t')\vert \leq \frac{\tilde C}{\ep ^{\gamma'}}\vert t -t'\vert, \quad 0\leq t,t'\leq T', 0<\ep<<1,
$$
for some $\tilde C>0$. From these observations and
\eqref{front-like0}, we see that there exists a constant $K>0$,
which is independent of $k$, such that
\begin{equation}\label{front-like}
  z^{(n)}\leq -K \Rightarrow w^k(z,\tau)\leq a _-+\eta\,,\quad
   z^{(n)}\geq K \Rightarrow w^k(z,\tau)\geq a_+-\eta\,,
\end{equation}
for all $(z,\tau) \in B^k$ with $\Vert z\Vert\leq\sqrt{1/\ep_k}$
and $|\tau|\leq 1/({\ep _k}^{1-\gamma'})$.

Now, since $w^k$ solves \eqref{edp-wk}, the uniform (w.r.t. $k\geq
0$) boundedness of $w^k$ and standard parabolic estimates, along
with the derivative bounds on $\tau \mapsto \ep_k \xi ^{\ep
_k}(t_k+\ep _k ^2 \tau)$ (see \eqref{estimates} or Proposition \ref{prop:Weber-noise}), imply that $w^k$ is
uniformly bounded in $C_{loc}^{2+\gamma,1+\frac \gamma 2}(B^1)$.
We can therefore extract from $(w^k)$ a subsequence that converges
to some $w$ in $C_{loc}^{2,1}(B^1)$. By repeating this on all
$B^k$, we can find a subsequence of $(w^k)$ that converges to some
$w$ in $C^{2,1}_{loc}(\R^n\times \R)$ (note that $\cup_{k\geq 0}
B^k=\R^n \times \R$). Passing to the limit in \eqref{edp-wk}
yields
\[
 w_\tau=\Delta _zw +f(w)\quad \text{ on } \R^n\times \R\,.
\]
Hence we have constructed an eternal solution $w(z,\tau)$ which---in view of \eqref{front-like}---satisfies
\eqref{conditions}. Lemma \ref{lem:Ber-Ham} then implies that
\begin{equation}\label{w=U0}
w(z,\tau)=U_0(z^{(n)}-z^*)
\end{equation}
for some $z^*\in\R$.

Now we define sequences of points $(z_k),\,(\tilde z_k)$ by
\[
 z_k:=\frac 1 {\ep _k} \mathcal R _k ^{-1}(x_k-p_k)\,,\quad
 \tilde z_k:=\frac 1 {\ep _k} \mathcal R _k ^{-1}(y_k-p_k)\,.
\]
By \eqref{near2}, these sequences are bounded, so we may assume
without loss of generality that they converge:
\[
  z_k\to z_\infty\,, \quad \tilde z_k\to \tilde z_\infty\,, \quad
  \hbox{as}\ \ k\to\infty\,.
\]
By the definition of the $z$ coordinates, $z_\infty$ must lie on
the $z^{(n)}$ axis, that is,
\[
  z_\infty=(0,\dots,0,z^{(n)}_\infty)^T\,.
\]
It follows from \eqref{u=0} and \eqref{circle} that
\begin{equation}\label{circle-w}
  w(\tilde z_\infty,0)=a\,,\quad\ \  w(z,0)\geq a \ \ \hbox{if}\ \
  \Vert z-z_\infty\Vert \leq     \Vert\tilde z_\infty-z_\infty\Vert\,.
\end{equation}
Note that by \eqref{w=U0}, the level set $w(z,0)=a$ coincides with
the hyperplane $z^{(n)}=z^*$, and recall that ${U_0}'>0$.
Therefore, in view of \eqref{w=U0} and \eqref{circle-w}, we have
either $\tilde z_\infty=z_\infty$, or that the ball of radius
$\Vert\tilde z_\infty-z_\infty\Vert$ centered at $z_\infty$ is
tangential to the hyperplane $z^{(n)}=z^*$ at $\tilde z_\infty$.
This implies that $\tilde z_\infty$, as well as $z_\infty$, must
also lie on the $z^{(n)}$ axis. Therefore
\[
\tilde z_\infty=(0,\dots,0,z^*)^T\,,
\]
and the inequality $w(z_\infty,0)\geq a$ implies that
$z_\infty^{(n)}\geq z^*$. On the other hand equality \eqref{d=x-y} implies
$\overline{d^{\ep_k}}(x_k,t_k)/\ep_k=\Vert
x_k-y_k\Vert/\ep_k=\Vert z_k-\tilde z_k\Vert\to \Vert
z_\infty-\tilde z_\infty \Vert =z^{(n)}_\infty-z^*$. The
assumption \eqref{eta} then yields
\begin{eqnarray*}
0&=& \left| w(z_\infty,0)-U_0(z_\infty^{(n)}-z^*)\right|\vspace{2pt}\\
&=&\left|\lim_{k\to\infty}
u^{\ep_k}(x_k,t_k)-U_0\Big(\lim_{k\to\infty}\frac{\overline{d^{\ep_k}}(x_k,t_k)}
{\ep_k}\Big)\right|\vspace{2pt}\\
& \geq& 2\eta\,.
\end{eqnarray*}
This contradiction proves statement (ii) of Theorem
\ref{th:main}.\qed

\subsection{Proof of (i) in Theorem \ref{th:profile}}
The proof of (i) below uses an argument similar to the proof of
Corollary 4.8 in \cite{Mat-Nar}. Fix $\rho >1$ and $0<T'<T$. For a
given $\eta \in (0,\min(a-a _-,a_+ -a))$ define $\ep _0 >0$ and
$C>0$ as in Theorem \ref{th:main}. Then we claim that
\begin{equation}\label{claim}
  \liminf _{\ep \to 0} \inf _{x\in \mathcal N_{C \ep}(\gamma
  _t ^\ep),\, \rho t^\ep \leq t \leq T'} \nabla u^\ep (x,t) \cdot
  n^\ep(p(x,t),t)>0\,,
\end{equation}
where $n^\ep(p,t)$ denotes the outward unit normal vector at $p\in
\gamma _t ^\ep$.

Indeed, assume by contradiction that there exist sequences $\ep
_k \downarrow 0$, $t_k\in[\rho t^{\ep _k},T']$, $x_k \in \mathcal
N_{C \ep _k}(\gamma _{t_k}^{\ep _k})$ ($k=1,2,...$) such that
\[
  \nabla u^{\ep _k} (x_k,t_k) \cdot n^{\ep _k}(p_k,t_k) \leq 0\,,
\]
where $p_k=p(x_k,t_k)$. By rescaling around $(p_k,t_k)$ and using
arguments similar to those in the proof of (ii), one can find a
point $z_\infty$ with $|z_\infty ^{(n)} |\leq C$ such that
\[
  {U_0}'(z_\infty ^{(n)})\leq 0\,,
\]
which contradicts to the fact that ${U_0}'>0$ and
establishes \eqref{claim}. Since, in view of Theorem
\ref{th:main}, $\Gamma _t ^\ep \subset \mathcal N _{C \ep}(\gamma
_t ^\ep)$, the estimate \eqref{claim} implies that $\nabla u^\ep
(x,t) \neq 0$ for all $x\in \Gamma _t ^\ep$; hence by the
implicit function theorem, $\Gamma _t ^\ep$ is a smooth
hypersurface in a neighborhood of any point on it. The fact that
$\Gamma _t ^\ep$ can be expressed as a graph over $\gamma _t
^\ep$ also follows from \eqref{claim}. This proves the
statement (i) of Theorem \ref{th:main}. \qed

\bigskip

\noindent {\bf Acknowledgments.} M. Alfaro is supported by
the ANR I-SITE MUSE, project MICHEL 170544IA (n$^{o}$
ANR-IDEX-0006). H. Matano is supported by KAKENHI (16H02151).


\end{document}